\documentclass[twoside,12pt]{article}

\usepackage{amsmath,amssymb,amsthm,mathrsfs}
\usepackage{enumitem}
\usepackage{xcolor}
\usepackage[top=2cm,bottom=2cm,left=2cm,right=2cm]{geometry}
\usepackage[colorlinks,linkcolor=blue,citecolor=blue,urlcolor=blue]{hyperref}

\allowdisplaybreaks[4]
\setlist[enumerate,1]{label=(\arabic*)}

\newcommand{\Q}{\mathbb{Q}}
\newcommand{\R}{\mathbb{R}}
\newcommand{\Z}{\mathbb{Z}}
\newcommand{\ord}{\mathrm{ord}}

\newcommand{\rand}{\mathrm{rand}}
\newcommand{\Mord}{M^{\ord}}

\newcommand{\Prob}{\mathbb{P}}
\DeclareMathOperator{\E}{\mathbb{E}}

\newtheorem{theorem}{Theorem}[section]
\newtheorem{conjecture}{Conjecture}
\newtheorem{lemma}{Lemma}[section]

\newtheorem{corollary}[theorem]{Corollary}
\newtheorem{proposition}{Proposition}[section]

\begin{document}

\title{\textbf{Complete Resolution of the Butler--Costello--Graham Conjecture on Monochromatic Constellations}}

\author{Gang Yang\footnote{Graduate School of Environment and Information Sciences, Yokohama
National University, 79-2 Tokiwadai, Hodogaya-ku, Yokohama 240-8501,
Japan. {\tt gangyang98@outlook.com}},\quad
Yaping Mao\footnote{Corresponding author: Academy of Plateau
Science and Sustainability, and School of Mathematics and Statistics, Qinghai Normal University, Xining, Qinghai 810008, China. {\tt yapingmao@outlook.com; myp@qhnu.edu.cn}}}
\date{}
\maketitle

\begin{abstract}
A constellation pattern is a finite increasing rational sequence
\(Q=[0=q_0<q_1<\cdots<q_k=1]\), and a \(Q\)-constellation in
\([n]\) is obtained by scaling and translating a rational pattern $Q$, with key examples including arithmetic progressions. In 2010,
Butler, Costello, and Graham proposed a conjecture, that is, 
for any constellation pattern $Q$ there is a coloring pattern of $[n]$ that has $\gamma n^2+o\left(n^2\right)$ monochromatic constellations, where $\gamma$ is smaller than the coefficient for a random coloring. In this paper, we confirm this conjecture. As applications of this conjecture, we obtain interval-uncommon translation-invariant linear systems associated with rational constellations and a ground-state bound for deterministic arithmetic hypergraph spin systems. 
\\[2mm]
{\bf Keywords:} Ramsey theory; Constellation; Multiplicity; Fourier perturbation; Integer coloring\\[2mm]
{\bf AMS subject classification 2020:} 05D10; 11B75, 05A16\\[2mm]
\end{abstract}

\section{Introduction}\label{sec:introduction}

Enumerative combinatorics studies quantitative questions about finite
discrete structures: rather than merely asking whether a prescribed object
exists, one asks how many such objects occur \cite{Stanley99}.  Classical
examples include the enumeration of orbits under group actions, as formalized
by P\'olya's enumeration theorem \cite{Polya37}, the enumeration of formal
languages in the Chomsky--Sch\"utzenberger theory \cite{ChomskySchutzenberger63},
and the enumeration of spanning trees, originating in Kirchhoff's work on
electrical networks and later developed in combinatorics, chemistry, physics,
and network science \cite{Kirchhoff47,Mallion75,McKay83,Lyons05,ZLWZ11}.
These examples illustrate a general theme: qualitative structural questions
often lead naturally to quantitative counting problems.

Ramsey theory is another major source of such quantitative problems.  Its
classical form asserts that sufficiently large colored structures necessarily
contain monochromatic substructures of a prescribed type \cite{GRS90,LR04}.  In
the integer setting, this philosophy includes monochromatic arithmetic
progressions, polynomial extensions of van der Waerden's theorem, Rado-type
partition theorems, Hales--Jewett theory, and density and discrepancy questions
for arithmetic configurations \cite{BL96,BL99,CLP21,FK06,MS96,Po12,RR95,RR97,Sh88}.
The passage from existence to counting leads to \emph{Ramsey multiplicity
theory}: given a fixed pattern and a class of colorings, one asks for the
minimum possible number of monochromatic copies of that pattern
\cite{BR80,GRR96,JST96}.

In graph Ramsey theory this viewpoint led to the notion of common and uncommon
graphs: a graph is common if the random edge-coloring asymptotically minimizes
the number of monochromatic copies \cite{Goodman59,BR80,JST96}.  Goodman's
theorem shows that triangles are common \cite{Goodman59}, while Erd\H{o}s
conjectured analogous behavior for complete graphs and Burr--Rosta proposed a
broad extension to all graphs \cite{Erdos62,BR80}.  These conjectures were
disproved by Sidorenko and Thomason, and Jagger, \v{S}\v{t}ov\'\i\v{c}ek, and
Thomason later showed that every graph containing a copy of \(K_4\) is
uncommon \cite{Sidorenko89,Th89,JST96}.  Recent work continues to show that
Ramsey multiplicity constants can display highly non-random extremal behavior
\cite{FoxWigderson23}.

Ramsey multiplicity has found applications in a wide range of additive settings, including integer intervals \cite{CE23,GRR96}, finite abelian groups \cite{SW17,Ver23}, and finite vector spaces  \cite{FPZ21,RS26}.
A prototypical example, posed by Graham, R\"odl, and Ruci\'nski, is to
determine the minimum number of monochromatic solutions to \(x+y=z\) in a
two-coloring of \([n]=\{1,2,\ldots,n\}\) \cite{GRR96}.  This problem was
solved independently by Robertson and Zeilberger and by Schoen, with a later
proof by Datskovsky; under the unordered convention the asymptotic minimum is
\(n^2/22+O(n)\), equivalently \(n^2/11+O(n)\) in the ordered convention
\cite{Da03,RZ98,Sc99}.  Subsequent work treated three-term arithmetic
progressions and generalized Schur-type equations, showing that even very
simple-looking additive configurations can have extremal colorings that differ
substantially from random colorings \cite{PRS08,Th09,TW17}.  More recently,
the commonness problem for linear equations and systems has been studied in
finite-group and finite-field models, further emphasizing the connection
between additive combinatorics and Ramsey multiplicity \cite{CE23,FPZ21,RS26,SW17,Ver23}.

Butler, Costello, and Graham \cite{BCG10} introduced the notion of a \emph{constellation} to unify many rational affine patterns.  Here and throughout, \(\Q\), \(\R\), and
\(\Z\) denote the sets of rational, real, and integer numbers, respectively. A
\textit{constellation pattern} is a strictly increasing rational sequence
\[
Q=[q_0=0<q_1<\cdots<q_k=1],
\qquad q_i\in\Q.
\]
A \emph{\(Q\)-constellation in \([n]\)} is a homothetic copy of this pattern
whose points are integers:
\[
\{s+q_0d,s+q_1d,\dots,s+q_kd\}\subseteq [n],
\qquad s,d\in\Z,\quad d\ne 0.
\]
We call \(s\) and \(s+d\) the \emph{endpoints} of the \(Q\)-constellation in
\([n]\).  Arithmetic progressions correspond to the choice \(q_i=i/k\).  We
say that \(Q\) is \emph{symmetric} if \(q_i+q_{k-i}=1\) for all
\(0\le i\le k\), and \emph{nonsymmetric} otherwise.

Given a two-coloring \(\chi:[n]\to\{\pm1\}\), let \(M_Q(\chi)\) denote the
number of monochromatic \(Q\)-constellations in \([n]\).  Let \(\chi_{\rand}\)
be the uniformly random two-coloring of \([n]\), in which the colors of the
points \(1,\dots,n\) are independent and uniformly distributed on
\(\{\pm1\}\).  A fixed \(Q\)-constellation in \([n]\) has \(k+1\) distinct
points, and hence is monochromatic under \(\chi_{\rand}\) with probability
\(2\cdot 2^{-(k+1)}=2^{-k}\).  Writing \(D\) for the least common denominator
of \(q_0,\dots,q_k\), the total number of \(Q\)-constellations in \([n]\) is
\(n^2/D+O_Q(n)\) in the nonsymmetric case and \(n^2/(2D)+O_Q(n)\) in the
symmetric case, as justified in Subsection~\ref{subsec:total-counts}.  Therefore, $\E\!\left[M_Q(\chi_{\rand})\right]
=
\gamma_{\rand}(Q)n^2+O_Q(n)$,
where
\[
\gamma_{\rand}(Q)=
\begin{cases}
\dfrac{1}{2^kD}, & Q \text{ is nonsymmetric},\\[8pt]
\dfrac{1}{2^{k+1}D}, & Q \text{ is symmetric}.
\end{cases}
\]
Here and throughout, \(\E\) denotes expectation. We call \(\gamma_{\rand}(Q)\) the random-coloring coefficient.

For three-point patterns \(Q=[0,q,1]\), Butler, Costello, and Graham proved
that the random-coloring coefficient can always be improved \cite{BCG10}.  They
left open the corresponding problem for constellation patterns with four or
more points.  The two-point pattern is the only trivial exception.  Indeed, if
\(Q=[0,1]\) and one color class has size \(r\), then the number of
monochromatic \(Q\)-constellations is
\[
\binom r2+\binom{n-r}{2}
=
\frac{r^2+(n-r)^2-n}{2}
\ge
\frac{n^2}{4}-\frac n2.
\]
Thus the random-coloring coefficient \(1/4\) is already optimal for
\(Q=[0,1]\), and no strict improvement is possible.

It is therefore natural to formulate the Butler--Costello--Graham problem as
follows.

\begin{conjecture}[Butler--Costello--Graham problem, \cite{BCG10}]
\label{con}
For every rational constellation pattern
\[
Q=[q_0=0<q_1<\cdots<q_k=1]
\]
with \(k\ge 2\), there exists a sequence of two-colorings
\(\chi_n:[n]\to\{\pm1\}\) such that
\[
M_Q(\chi_n)\le \gamma n^2+o(n^2)
\]
for some constant \(\gamma<\gamma_{\rand}(Q)\).
\end{conjecture}

Our main theorem confirms Conjecture~\ref{con}.

\begin{theorem}\label{thm:main}
Let \(Q=[q_0=0<q_1<\cdots<q_k=1]\) with \(q_i\in\Q\) and \(k\ge 2\).  Then
there exist a constant \(\delta_Q>0\) and a sequence of two-colorings
\(\chi_n:[n]\to\{\pm1\}\) such that
\[
M_Q(\chi_n)
\le
\bigl(\gamma_{\rand}(Q)-\delta_Q\bigr)n^2+O_Q(n),
\]
where
\[
\gamma_{\rand}(Q)=
\begin{cases}
\dfrac{1}{2^kD}, & Q \text{ is nonsymmetric},\\[8pt]
\dfrac{1}{2^{k+1}D}, & Q \text{ is symmetric},
\end{cases}
\]
and \(D\) is the least common denominator of \(q_0,\dots,q_k\).
\end{theorem}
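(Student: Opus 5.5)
We will prove Theorem~\ref{thm:main} by a perturbation argument around the random coloring, made deterministic through a continuous limit. First we pass to colorings of the form \(\chi_n(x)=\mathrm{sign}\)-rounding of a bounded measurable function \(f:[0,1]\to[-1,1]\). More precisely, we color \(x\in[n]\) randomly and independently with \(\Prob(\chi(x)=1)=(1+f(x/n))/2\), and then fix a realization achieving at most the expectation. Writing the monochromatic indicator of \(\{x_0,\dots,x_k\}\) as
\[
\frac{1}{2^{k+1}}\Bigl(\prod_{i}(1+\chi(x_i))+\prod_i(1-\chi(x_i))\Bigr)
=\frac{1}{2^{k}}\sum_{\substack{S\subseteq\{0,\dots,k\}\\ |S|\text{ even}}}\prod_{i\in S}\chi(x_i),
\]
and using independence at distinct points, we obtain
\[
\E M_Q=\frac{c_Q\,n^2}{2^{k}}\sum_{|S|\text{ even}}\Lambda_S(f)+O_Q(n),
\qquad
\Lambda_S(f)=\iint_{\Omega}\prod_{i\in S}f(s+q_id)\,ds\,dd .
\]
Here \(\Omega=\{(s,d):d>0,\ s,s+d\in[0,1]\}\). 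The normalization \(c_Q\) accounts for the lattice density \(1/D\) of admissible \((s,d)\) and for the symmetric/nonsymmetric count from Subsection~\ref{subsec:total-counts}. The \(S=\emptyset\) term recovers \(\gamma_{\rand}(Q)\). So it suffices to find \(f\) with \(\Phi(f):=\sum_{|S|\ge2\text{ even}}\Lambda_S(f)<0\), and then take \(\delta_Q\) proportional to \(-\Phi(f)\).

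Next we take \(f=\varepsilon g\) with \(g\) bounded and \(\varepsilon\to0\). Then \(\Phi(\varepsilon g)=\varepsilon^2\sum_{i<j}\Lambda_{\{i,j\}}(g)+O(\varepsilon^4)\), so we need the quadratic form
\[
B(g)=\sum_{0\le i<j\le k}\iint_\Omega g(s+q_id)\,g(s+q_jd)\,ds\,dd
\]
to take a negative value. We change variables \(u=s+q_id\), \(v=s+q_jd\) in each term. Since \(q_i<q_j\), the map \((s,d)\mapsto(u,v)\) is linear with Jacobian \(1/(q_j-q_i)\). Consequently \(B(g)=\iint_{0\le u<v\le1}K(u,v)g(u)g(v)\,du\,dv\), where
\[
K(u,v)=\sum_{i<j}\frac{\mathbf 1[\text{the constellation determined by }u,v\text{ lies in }[0,1]]}{q_j-q_i}.
\]
This \(K\) is a nonnegative kernel that is piecewise constant on polygonal regions. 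It suffices to show that the symmetric kernel \(K\) on \([0,1]^2\), which has zero diagonal measure, is not positive semidefinite.

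For this we choose \(g\) highly oscillating and localized: \(g(x)=\phi(x)\cos(Nx)\) with \(\phi\) a bump near a point \(x_0\). As \(N\to\infty\), the contribution from the smooth interior of the region where \(K\) is constant tends to zero by Riemann--Lebesgue, since there \(g\otimes g\) integrates against a constant to \((\int g)^2\to0\). Only the jump discontinuities of \(K\) survive, to leading order \(1/N\), with the sign governed by the jump across lines \(v-u=\text{const}\cdot(\ldots)\). A cleaner alternative, which we would try first, is to test \(g=\mathbf 1_{[a,a+h]}-\mathbf 1_{[a+h,a+2h]}\) near the endpoint \(a=0\) with \(h\to0\). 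Near the boundary the only constellations through two nearby points \(u<v\) are those with small \(d\), and they are constrained to the right of \(0\). This gives \(K(u,v)\approx\sum_{i<j}(q_j-q_i)^{-1}\mathbf 1[v-u\le (q_j-q_i)\,\cdot\,\text{slack}]\), and for a two-block test function one computes \(B\) explicitly as an integral of a homogeneous function. We expect negativity to come from the pair \((0,k)\) weight \(1\) versus the geometry of \(\Omega\). This mirrors the three-point computation of \cite{BCG10}.

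The main obstacle is proving that \(B\) is indefinite for \emph{every} \(Q\) with \(k\ge2\), uniformly in the structure of \(Q\). A given test function might make \(B\) positive for some patterns, as the case \(k=1\) shows: there \(B\ge0\) corresponds to the optimality of random coloring. The plan is therefore to isolate a contribution present only when \(k\ge2\). This is the term from interior points \(q_i\), \(0<i<k\), which produces kernel mass at distances \(v-u\) on lines with slopes \(q_i/q_j\neq\) those in the \(k=1\) case. We would then use a Fourier test function \(g(x)=\cos(2\pi Nx)\) restricted to a subinterval and compute the \(1/N\) boundary-jump asymptotics explicitly as a finite sum over pairs \(i<j\). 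The goal is to show that this sum has a strictly negative leading coefficient for a suitable choice of phase and support. If the sign depends on \(Q\), we would choose among a small finite family of test functions, such as cosine with sine phase or a boundary versus interior bump, and verify that their leading terms cannot all be nonnegative. Once a \(g\) with \(B(g)<0\) is found, fixing \(\varepsilon\) small enough to beat the \(O(\varepsilon^4)\) remainder yields \(\delta_Q\), and the derandomization together with the error bounds from discretizing integrals (each \(O_Q(n)\)) completes the proof.
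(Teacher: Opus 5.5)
There is a genuine gap. Your outer framework matches the paper's: biased product coloring, even-degree expansion of the monochromatic indicator, reduction to finding $g$ with $B(g)<0$, then fixing $\varepsilon$ against the $O(\varepsilon^4)$ remainder and derandomizing. But you never exhibit such a $g$. The step you call the main obstacle is the whole content of the theorem, and it is left as a list of things to try.

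Worse, the concrete candidates you name do not work.
\begin{itemize}
\item The pair $(0,k)$ contributes $\iint_\Omega g(s)g(s+d)\,ds\,dd=\tfrac12\bigl(\int_0^1 g\bigr)^2\ge 0$, so it can never be the source of negativity.
\item Take $g=\phi(x)\cos(Nx)$ with $\phi$ supported near an interior point $x_0$. Off the diagonal, the discontinuities of $K$ come from $s=0$ and $s+d=1$, so they lie on lines through the corners $(0,0)$ and $(1,1)$. Also $K$ does not jump across the diagonal. Hence $K$ is constant on the support of $g\otimes g$, and $B(g)=C\bigl(\int g\bigr)^2\ge 0$; there is no $1/N$ jump term at all.
\item Take the two-block test at $a=0$ with $h$ small. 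Near the corner, the kernel of the pair $(i,j)$ is $(q_j-q_i)^{-1}\mathbf 1[u<v\le (q_j/q_i)u]$. Pairs with $i=0$ contribute multiples of $(\int g)^2=0$. Each pair with $i\ge 1$ contributes $h^2\psi(q_j/q_i)/(q_j-q_i)$, where $\psi(c)=(c-1)(3-c)/c$ for $1<c\le 2$ and $\psi(c)=1/c$ for $c\ge 2$. So $B(g)>0$.
\end{itemize}
The corner idea is not hopeless. In the variable $\log u$ each such kernel is convolution with $\mathbf 1_{[-\log c,\log c]}$, and a weighted sum of the multipliers $\sin(\xi\log c)/\xi$ does take negative values. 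Exploiting this, however, requires wave packets oscillating in $\log u$, which you do not consider. Finally, ``choose among a small finite family and verify that their leading terms cannot all be nonnegative'' is a hope, not an argument.

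The missing idea is an exact null direction. Work in endpoint coordinates $(x,y)\in[0,1]^2$, i.e.\ with both signs of $d$. Your restriction to $d>0$ is itself wrong for nonsymmetric $Q$: the copies with $d<0$ are copies of the reflected pattern. They contribute $\Lambda_S(\tilde f)$ with $\tilde f(x)=f(1-x)$, which no constant $c_Q$ can absorb unless $f=\tilde f$.

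In these coordinates $D\,L_i(x,y)=(D-a_i)x+a_iy$ has integer coefficients. So for $u(x)=\cos(2\pi Dx)$, each product $u(L_i)u(L_j)$ with $i<j$ is a combination of nontrivial characters of $[0,1]^2$, and $T_2(u)=0$ exactly, for every $Q$.

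A single mode does not suffice in general: for $Q=[0,\tfrac12,1]$ the form at $\cos(2\pi Nx)$ vanishes for even $N$ and is positive for odd $N$. So the paper perturbs $u$ by $v(x)=\cos(2\pi(D+1)x)$. The identity $J(\theta)J(N-\theta)=-\sin^2(\pi\theta)/(\pi^2\theta(N-\theta))$ makes every mixed correlation $U_{a_i,a_j}$, $V_{a_i,a_j}$ nonnegative. Moreover $U_{0,a_1}>0$ because $0<a_1<D$, which is exactly where $k\ge 2$ enters. Hence $T_2(u-\tau v)=-\tau\Lambda_Q+O(\tau^2)<0$ for small $\tau>0$.

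Without this, or some other fully computed negative direction valid for every $Q$, the indefiniteness of $B$ remains unproved, and with it the theorem.
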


Our proof follows a unified strategy rather than relying on isolated
constructions tailored to particular patterns.  We first encode monochromatic
counts by a continuous functional \(\Phi_Q(b)\) associated with a bias function
\(b:[0,1]\to[-1,1]\).  The key discrete-to-continuous step is a uniform
Riemann-sum estimate on each residue class modulo \(D\), valid for every
Lipschitz function and every residue class, with a uniform \(O(1)\) remainder.
After transferring the discrete counting problem to the continuous functional,
we expand \(\Phi_Q(\lambda g)\) around the random point \(b\equiv0\).  The
first nontrivial term is a quadratic form \(T_2(g)\).  We then show that the
Fourier mode \(u(x)=\cos(2\pi D x)\) is a zero direction for \(T_2\), while
its mixed bilinear term with \(v(x)=\cos(2\pi(D+1)x)\) is strictly positive.
Hence a small perturbation of \(u\) in the direction \(-v\) yields
\(T_2(g)<0\), and for sufficiently small \(\lambda>0\) gives
\(\Phi_Q(\lambda g)<2^{-k}\).  A final application of the method of conditional
expectations converts the resulting biased random construction into a
deterministic coloring.

The paper is organized as follows.  Section~\ref{sec:preliminaries} develops
the endpoint model and the basic counting lemmas.  Section~\ref{sec:continuous-transfer}
introduces the continuous functional, proves the discrete-to-continuous
transfer, and derives the second-order expansion at the random point.
Section~\ref{sec:negative-direction} constructs an explicit negative direction
for the quadratic form.  Section~\ref{sec:deterministic-colorings} converts the
biased random construction into deterministic colorings and completes the proof
of Theorem~\ref{thm:main}.  Section~\ref{sec:applications} presents
applications of the main result.

\section{Preliminaries}\label{sec:preliminaries}

Throughout this section, fix a rational pattern
\(Q=[q_0=0<q_1<\cdots<q_k=1]\), where \(q_i\in\Q\), and assume \(k\ge2\).
Recall that \(D\) is the least common denominator of \(q_0,\dots,q_k\), and
write \(a_i=Dq_i\) for \(0\le i\le k\).  Then
\(0=a_0<a_1<\cdots<a_k=D\).  By the minimality of \(D\), we have
\(\gcd(a_1,\dots,a_{k-1},D)=1\).

The next proposition shows that every \(Q\)-constellation in \([n]\) can be
parametrized by its two endpoints \(p,q\), as in \textnormal{(ii)} below; this
parametrization will be used throughout.

\begin{proposition}\label{pro-1}
The following two descriptions of a \(Q\)-constellation in \([n]\) are
equivalent:
\begin{enumerate}[label=\textnormal{(\roman*)}]
\item a set of the form
\(\{s+q_0d,s+q_1d,\dots,s+q_kd\}\subseteq[n]\), where
\(s,d\in\Z\) and \(d\ne0\);
\item a set of the form
\begin{equation}\label{eq-xpq}
\left\{
x_i(p,q)=\frac{(D-a_i)p+a_iq}{D}:0\le i\le k
\right\}\subseteq[n],
\end{equation}
where \(p,q\in[n]\), \(p\ne q\), and \(p\equiv q\pmod D\).
\end{enumerate}
\end{proposition}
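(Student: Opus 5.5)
We prove the two inclusions of set families directly, using the substitution \(p=s\), \(q=s+d\); that is, the endpoints of the constellation serve as parameters. The only number-theoretic input is the fact recorded just before the proposition, that \(\gcd(a_1,\dots,a_{k-1},D)=1\) by minimality of \(D\). Note also that \(q_i=a_i/D\), so
\[
s+q_id=\frac{(D-a_i)s+a_i(s+d)}{D}=x_i(s,s+d).
\]
Hence both descriptions parametrize the same sets once we match \(p=s\), \(q=s+d\). What remains is to check that the side conditions correspond.

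\emph{(ii)\(\Rightarrow\)(i).} Given \(p\ne q\) in \([n]\) with \(p\equiv q\pmod D\), set \(s=p\) and \(d=q-p\). Then \(s,d\in\Z\) and \(d\ne0\). The identity above shows the two sets coincide, so the set is of form (i). Integrality of the points is automatic here. Indeed, since \(D\mid d\), we have \(x_i=p+a_i d/D\in\Z\), which is consistent with the set lying in \([n]\).

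\emph{(i)\(\Rightarrow\)(ii).} Given \(s,d\in\Z\) with \(d\ne0\) and all \(s+q_id\in[n]\), set \(p=s\) and \(q=s+d\). Since \(q_0=0\) and \(q_k=1\), both \(p\) and \(q\) are points of the set, so \(p,q\in[n]\), and \(p\ne q\) because \(d\ne0\). The substantive step, and the only real obstacle, is showing \(p\equiv q\pmod D\), i.e. \(D\mid d\). Each point \(s+a_id/D\) is an integer and \(s\in\Z\), so \(D\mid a_id\) for every \(i\). Choose a Bézout combination \(\sum_i c_ia_i+c_kD=1\), which exists by \(\gcd(a_1,\dots,a_{k-1},D)=1\). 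Then
\[
d=\sum_i c_i(a_id)+c_k Dd\equiv0\pmod D.
\]
Equivalently, \(d/D\) has denominator dividing each \(D/\gcd(a_i,D)\); if that denominator were greater than \(1\), the \(a_i\) would share a common factor with \(D\), contradicting minimality. With \(D\mid d\) established, the set equals \(\{x_i(p,q)\}\) as in \eqref{eq-xpq}, which completes the equivalence.
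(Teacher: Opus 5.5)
Your proof is correct and matches the paper's argument essentially step for step: you use the substitution $p=s$, $q=s+d$, and you derive $D\mid d$ from $D\mid a_id$ via a B\'ezout combination coming from $\gcd(a_1,\dots,a_{k-1},D)=1$. One small slip is in your redundant ``Equivalently'' remark: $D\mid a_id$ shows that the reduced denominator of $d/D$ divides $\gcd(a_i,D)$, not $D/\gcd(a_i,D)$ (take $D=8$, $a_i=4$, $d=2$), but your main B\'ezout argument does not rely on that remark.
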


\begin{proof}
Suppose first that
\(X=\{s+q_0d,s+q_1d,\dots,s+q_kd\}\subseteq[n]\), where
\(s,d\in\Z\) and \(d\ne0\).  Set \(p=s\) and \(q=s+d\).  Since \(q_0=0\)
and \(q_k=1\), we have \(p,q\in[n]\), and \(p\ne q\).  It remains to show
that \(p\equiv q\pmod D\), equivalently \(D\mid d\).  For every \(i\), the
point \(s+q_i d=s+a_i d/D\) is an integer, and hence \(D\mid a_i d\).  Since
\(D\) is the least common denominator of \(q_0,\dots,q_k\), we have
\(\gcd(a_1,\dots,a_{k-1},D)=1\).  Therefore there exist integers
\(c_1,\dots,c_{k-1},m\) such that
$c_1a_1+\cdots+c_{k-1}a_{k-1}+mD=1$.
Multiplying by \(d\), we obtain
$d=c_1a_1d+\cdots+c_{k-1}a_{k-1}d+mDd,$
and every term on the right-hand side is divisible by \(D\).  Thus
\(D\mid d\), so \(p\equiv q\pmod D\).  Moreover,
\[
 x_i(p,q)
 =
 \frac{(D-a_i)s+a_i(s+d)}{D}
 =
 s+\frac{a_i}{D}d
 =
 s+q_i d
\]
for every \(i\).  Hence \(X\) has the form in \textnormal{(ii)}.

Conversely, suppose that \(p,q\in[n]\), \(p\ne q\), and
\(p\equiv q\pmod D\).  Put \(s=p\) and \(d=q-p\).  Then \(s,d\in\Z\),
\(d\ne0\), and \(D\mid d\).  For every \(i\),
\[
 x_i(p,q)
 =
 \frac{(D-a_i)p+a_iq}{D}
 =
 p+\frac{a_i}{D}(q-p)
 =
 s+q_i d.
\]
Thus the set in \eqref{eq-xpq} has the form in \textnormal{(i)}.  Since
\(x_i(p,q)=(1-a_i/D)p+(a_i/D)q\) is a convex combination of \(p\) and \(q\),
and since \(p,q\in[n]\), all its points lie in \([n]\).  This proves the
equivalence.
\end{proof}

\subsection{Ordered constellations}

An \emph{ordered \(Q\)-constellation in \([n]\)} is an ordered tuple
\((x_0(p,q),x_1(p,q),\dots,x_k(p,q))\), where \(x_i(p,q)\) is defined by
\eqref{eq-xpq}, \(p,q\in[n]\), \(p\ne q\), and \(p\equiv q\pmod D\).  Its
underlying set is the corresponding \(Q\)-constellation in \([n]\).  For a
coloring \(\chi:[n]\to\{\pm1\}\), let \(\Mord_Q(\chi)\) denote the number of
monochromatic ordered \(Q\)-constellations in \([n]\).

\begin{lemma}\label{lem:distinct}
Every ordered \(Q\)-constellation in \([n]\) consists of \(k+1\) distinct
integers.
\end{lemma}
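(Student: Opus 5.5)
The plan is a direct computation from the parametrization \eqref{eq-xpq}. Fix an ordered \(Q\)-constellation \((x_0(p,q),\dots,x_k(p,q))\) with \(p,q\in[n]\), \(p\ne q\), \(p\equiv q\pmod D\). We need two facts: that each \(x_i(p,q)\) is an integer in \([n]\), and that the \(k+1\) values are pairwise distinct.

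For integrality and range, we invoke Proposition~\ref{pro-1}. Write
\[
x_i(p,q)=p+\frac{a_i}{D}(q-p).
\]
Since \(D\mid q-p\), each \(x_i(p,q)\) is an integer. As a convex combination of \(p,q\in[n]\), it also lies in \([n]\).

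For distinctness, take \(0\le i<j\le k\). Then
\[
x_j(p,q)-x_i(p,q)=\frac{(a_j-a_i)(q-p)}{D}.
\]
We have \(a_j-a_i>0\) because \(0=a_0<a_1<\cdots<a_k=D\) is strictly increasing, and \(q-p\ne0\) by assumption. Hence the difference is nonzero.

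The argument also shows the points are strictly monotone in \(i\): increasing if \(q>p\) and decreasing if \(q<p\). This gives exactly \(k+1\) distinct integers.

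There is no real obstacle here. The only point needing care is that distinctness relies on the strict ordering of the \(q_i\) (equivalently the \(a_i\)) together with \(p\ne q\). The hypothesis \(p\equiv q\pmod D\) is used only for integrality.
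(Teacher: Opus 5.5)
Your proof is correct and is essentially the paper's argument: the paper writes \(x_i(p,q)=\phi_{p,q}(a_i/D)\) with the affine map \(\phi_{p,q}(t)=p+t(q-p)\), which is injective because \(p\ne q\), and this is your computation \(x_j-x_i=(a_j-a_i)(q-p)/D\neq 0\) in more abstract form. Your extra check of integrality and membership in \([n]\) is not needed for this lemma, since it was already established in Proposition~\ref{pro-1}, but it does no harm.
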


\begin{proof}
Consider the affine map \(\phi_{p,q}(t)=p+t(q-p)\).  Since \(p\ne q\), this
map is injective.  Moreover, \(x_i(p,q)=\phi_{p,q}(a_i/D)\) for every \(i\).
Since \(a_0,\dots,a_k\) are distinct, the points
\(x_0(p,q),\dots,x_k(p,q)\) are distinct.
\end{proof}

\begin{lemma}\label{lem:actual-ordered}
Suppose that \(p\ne q\), \(p'\ne q'\), and
\[
\{x_i(p,q):0\le i\le k\}=\{x_i(p',q'):0\le i\le k\}.
\]
Then \(\{p,q\}=\{p',q'\}\).  Moreover:
\begin{enumerate}[label=\textnormal{(\roman*)}]
\item if \((p',q')=(p,q)\), then the two pairs determine the same ordered
\(Q\)-constellation in \([n]\);
\item if \((p',q')=(q,p)\), then the two pairs determine the same
\(Q\)-constellation in \([n]\) if and only if \(Q\) is symmetric.
\end{enumerate}
Consequently, every \(Q\)-constellation in \([n]\) corresponds to exactly one
ordered \(Q\)-constellation in \([n]\) when \(Q\) is nonsymmetric, and to
exactly two ordered \(Q\)-constellations in \([n]\) when \(Q\) is symmetric.
\end{lemma}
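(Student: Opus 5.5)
The first step is to show that the two endpoints are recovered from the underlying set. By Lemma~\ref{lem:distinct}, the set $X=\{x_i(p,q)\}$ has exactly $k+1$ distinct elements. Each $x_i(p,q)=\phi_{p,q}(a_i/D)$ is a convex combination of $p$ and $q$, and $x_0=p$, $x_k=q$. Hence $\min X=\min\{p,q\}$ and $\max X=\max\{p,q\}$. Applying the same reasoning to $(p',q')$ gives $\{p,q\}=\{\min X,\max X\}=\{p',q'\}$. This proves the first assertion, and it leaves only the two possibilities $(p',q')=(p,q)$ and $(p',q')=(q,p)$.

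Part (i) is immediate. The ordered tuple $(x_0(p,q),\dots,x_k(p,q))$ depends only on the pair $(p,q)$, so equal pairs give equal ordered constellations.

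For part (ii), I would compute $x_i(q,p)=\phi_{p,q}(1-a_i/D)=\phi_{p,q}(1-q_i)$. Since $\phi_{p,q}$ is injective, the set $\{x_i(q,p)\}$ equals $\{x_i(p,q)\}$ if and only if $\{1-q_i\}=\{q_i\}$ as subsets of $[0,1]$. Both sets have $k+1$ elements. The map $t\mapsto 1-t$ is order-reversing, so the increasing enumeration of $\{1-q_i\}$ is $1-q_k<\cdots<1-q_0$. Set equality is therefore equivalent to $1-q_{k-i}=q_i$ for all $i$, which is exactly the definition of symmetry. The statement asks whether the two pairs give the same set, so I read ``same $Q$-constellation'' as equality of underlying sets, which is what we have just characterized.

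For the consequence, consider the map sending an ordered constellation (equivalently, a valid pair $(p,q)$) to its underlying set. This map is surjective onto $Q$-constellations by Proposition~\ref{pro-1}. The condition $p\equiv q\pmod D$ is symmetric in $p$ and $q$, so $(q,p)$ is also a valid pair. By the first assertion, the fiber over a set $X$ is contained in $\{(p,q),(q,p)\}$. These are distinct ordered tuples because $x_0$ differs ($p\ne q$). By part (ii), $(q,p)$ lies in the fiber exactly when $Q$ is symmetric, so the fiber has size one in the nonsymmetric case and size two in the symmetric case. The only mildly delicate point in the whole argument is the set-equality step in (ii), where the order-reversal argument is needed to turn set equality into the termwise identity $q_i+q_{k-i}=1$.
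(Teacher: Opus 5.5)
Your proposal is correct and follows essentially the same route as the paper: you recover \(\{p,q\}\) as the two extreme points of the underlying set, then use injectivity of \(\phi_{p,q}\), the identity \(x_i(q,p)=\phi_{p,q}(1-a_i/D)\), and the order-reversing map \(t\mapsto 1-t\) to reduce set equality to \(q_i+q_{k-i}=1\). Your fiber count is slightly more explicit than the paper's, since you check that \((q,p)\) is again an admissible endpoint pair and that the two ordered tuples are distinct.
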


\begin{proof}
Recall that \(a_0=0\) and \(a_k=D\).  From \eqref{eq-xpq},
\(x_0(p,q)=p\) and \(x_k(p,q)=q\).  Since \(0\le a_i\le D\), every
\(x_i(p,q)\) lies in the closed interval with endpoints \(p\) and \(q\).  Thus
\(\{p,q\}\) is precisely the set of the two extreme points of
\(\{x_i(p,q):0\le i\le k\}\).  The same is true for \(\{p',q'\}\).  Therefore
\(\{p,q\}=\{p',q'\}\), and hence either \((p',q')=(p,q)\) or
\((p',q')=(q,p)\).  Part \textnormal{(i)} is immediate.

For part \textnormal{(ii)}, assume \((p',q')=(q,p)\).  Since \(p\ne q\), the
affine map \(\phi_{p,q}(t)=p+t(q-p)\) is injective, and
\[
x_i(p,q)=\phi_{p,q}(a_i/D),
\qquad
x_i(q,p)=\phi_{p,q}(1-a_i/D).
\]
Hence
\[
\{x_i(p,q):0\le i\le k\}=\{x_i(q,p):0\le i\le k\}
\]
if and only if
\(\{a_i/D:0\le i\le k\}=\{1-a_i/D:0\le i\le k\}\), equivalently
\(\{a_0,\dots,a_k\}=\{D-a_0,\dots,D-a_k\}\).  Since
\(0=a_0<a_1<\cdots<a_k=D\), this happens if and only if
\(a_i=D-a_{k-i}\) for all \(i\), equivalently \(q_i+q_{k-i}=1\) for all
\(i\).  This is precisely the symmetry of \(Q\).

Consequently, every \(Q\)-constellation in \([n]\) arises from at most two ordered endpoint pairs, namely \((p,q)\) and \((q,p)\). By part \textnormal{(ii)}, both pairs determine the same \(Q\)-constellation in \([n]\) if and only if \(Q\) is symmetric. Hence every \(Q\)-constellation in \([n]\) corresponds to exactly one ordered \(Q\)-constellation when \(Q\) is nonsymmetric, and to exactly two ordered \(Q\)-constellations in \([n]\) when \(Q\) is symmetric.
\end{proof}

\subsection{Total counts and the random coefficient}\label{subsec:total-counts}

We now justify the random-coloring coefficient used in the introduction.  For
\(r\in\{0,1,\dots,D-1\}\), let
\(A_r=\{m\in[n]:m\equiv r\pmod D\}\) and \(N_r=|A_r|\).  Then
\(N_r=n/D+O_D(1)\), uniformly in \(r\).  Let \(N_Q^{\ord}(n)\) and \(N_Q(n)\)
denote, respectively, the total numbers of ordered \(Q\)-constellations in
\([n]\) and \(Q\)-constellations in \([n]\).

Ordered \(Q\)-constellations in \([n]\) are exactly ordered pairs of distinct
endpoints from a common residue class modulo \(D\).  Hence
\[
N_Q^{\ord}(n)=\sum_{r=0}^{D-1}N_r(N_r-1)=\frac{n^2}{D}+O_D(n).
\]
By Lemma~\ref{lem:actual-ordered}, it follows that
\begin{equation}\label{eq:Nact}
N_Q(n)=
\begin{cases}
\dfrac{n^2}{D}+O_D(n), & Q \text{ is nonsymmetric},\\[8pt]
\dfrac{n^2}{2D}+O_D(n), & Q \text{ is symmetric}.
\end{cases}
\end{equation}

Recall that \(\chi_{\rand}\) is a uniformly random two-coloring of \([n]\), in
which the colors of the points \(1,\dots,n\) are independent and uniformly
distributed on \(\{\pm1\}\).  By Lemma~\ref{lem:distinct}, every
\(Q\)-constellation in \([n]\) has \(k+1\) distinct points.  Therefore a fixed
\(Q\)-constellation in \([n]\) is monochromatic under \(\chi_{\rand}\) with
probability \(2\cdot 2^{-(k+1)}=2^{-k}\).  Thus, $\E\!\left[M_Q(\chi_{\rand})\right]=2^{-k}N_Q(n)$.
Combining this with \eqref{eq:Nact} gives the random-coloring coefficient
\begin{equation}\label{eq:Nact2}
\gamma_{\rand}(Q)=
\begin{cases}
\dfrac{1}{2^kD}, & Q \text{ is nonsymmetric},\\[8pt]
\dfrac{1}{2^{k+1}D}, & Q \text{ is symmetric}.
\end{cases}
\end{equation}

\section{A continuous model and the discrete-to-continuous transfer}
\label{sec:continuous-transfer}

The purpose of this section is to pass from the discrete counting problem to a
continuous bias model on \([0,1]\).  The endpoint parametrization from
Section~\ref{sec:preliminaries} suggests normalizing the endpoints
\(p,q\in[n]\) by \(x=p/n\) and \(y=q/n\).  This leads to a continuous
functional whose value describes the limiting monochromatic probability for a
biased product coloring.

For normalized endpoints \(x=p/n\) and \(y=q/n\), the normalized coordinate of
the \(i\)-th point of the corresponding \(Q\)-constellation is
\[
\frac{x_i(p,q)}{n}
=
\left(1-\frac{a_i}{D}\right)x+\frac{a_i}{D}y
=
(1-q_i)x+q_i y.
\]
For \(x,y\in[0,1]\), define the affine forms
\[
L_i(x,y)
=
(1-q_i)x+q_i y
=
\left(1-\frac{a_i}{D}\right)x+\frac{a_i}{D}y,
\qquad 0\le i\le k.
\]
Since \(0\le q_i\le1\), each \(L_i(x,y)\) lies in \([0,1]\).  Thus
\(L_i(x,y)\) is the normalized position of the \(i\)-th point of the
constellation with normalized endpoints \(x\) and \(y\).

For any measurable \(b:[0,1]\to[-1,1]\), define
\begin{equation}\label{eq:Phi}
\Phi_Q(b)
=
\int_0^1\!\int_0^1
\left(
\prod_{i=0}^k\frac{1+b(L_i(x,y))}{2}
+
\prod_{i=0}^k\frac{1-b(L_i(x,y))}{2}
\right)
\,dy\,dx.
\end{equation}
Here \(b\) is a local bias: at a point \(z\in[0,1]\), the probabilities of
the colors \(+1\) and \(-1\) are \((1+b(z))/2\) and \((1-b(z))/2\),
respectively.  Thus, for fixed normalized endpoints \((x,y)\), the two
products in \eqref{eq:Phi} are the probabilities that all points of the
corresponding constellation are colored \(+1\) and \(-1\), respectively.
Therefore \(\Phi_Q(b)\) is the average monochromatic probability in the
continuous endpoint model.  In particular, if \(b\equiv0\), then
\(\Phi_Q(0)=2^{-k}\).

Our continuous goal is to find a bias \(b\) for which \(\Phi_Q(b)<2^{-k}\).
The transfer result below will show that, when \(b\) is Lipschitz, such an
inequality gives a product coloring of \([n]\) whose expected number of
monochromatic \(Q\)-constellations is below the random-coloring main term.

For each \(n\), let \(\chi_{n,b}\) be the product coloring of \([n]\) defined
by
\[
\Prob\bigl(\chi_{n,b}(m)=1\bigr)
=
\frac{1+b(m/n)}{2},
\qquad
\Prob\bigl(\chi_{n,b}(m)=-1\bigr)
=
\frac{1-b(m/n)}{2},
\]
independently for \(1\le m\le n\), where \(\Prob\) denotes probability.  In
particular, \(\chi_{n,0}\) has the same distribution as \(\chi_{\rand}\).

\subsection{Uniform residue-class Riemann sums}
Having obtained the continuous average monochromatic probability $\Phi_Q(b)$, we now aim to connect it to the expected number of monochromatic $Q$-constellations in the discrete setting. A critical preliminary observation is that the endpoints $p,q$ of a discrete $Q$-constellation are not arbitrary integer pairs, but must satisfy the congruence $p \equiv q \pmod{D}$. For this reason, standard Riemann sum approximations are not directly applicable, and we instead need a dedicated result for Riemann sums sampled over residue classes modulo $D$.

For \(r\in\{0,1,\dots,D-1\}\), let
\[
Y_r
=
\left\{
\frac{r+D\beta}{n}: 1\le r+D\beta\le n,\ \beta\in\Z
\right\}.
\]
Thus \(Y_r\) is the normalized set of integers in \([n]\) congruent to \(r\)
modulo \(D\).

\begin{lemma}\label{lem:Riemann}
Let \(f:[0,1]\to\R\) be Lipschitz.  For each
\(r\in\{0,1,\dots,D-1\}\), define \(T_r(f)=\sum_{y\in Y_r}f(y)\).  Then,
uniformly in \(r\),
\begin{equation}\label{eq:Riemann-main}
T_r(f)=\frac{n}{D}\int_0^1 f(t)\,dt+O_f(1).
\end{equation}
More precisely, if \(L_f\) is a Lipschitz constant for \(f\) and
\(M_f=\|f\|_\infty\), then
\begin{equation}\label{eq:Riemann-precise}
\left|\int_0^1 f(t)\,dt-\frac{D}{n}T_r(f)\right|
\le (L_f+M_f)\frac{D}{n}
\end{equation}
for every \(r\).
\end{lemma}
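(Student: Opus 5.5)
The plan is to treat $T_r(f)$ as a Riemann sum with mesh $h=D/n$ and compare it cell by cell with the integral. List the points of $Y_r$ in increasing order as $y_1<y_2<\cdots<y_N$. Consecutive points differ by exactly $h=D/n$, and $N=N_r=n/D+O_D(1)$ as recorded in Subsection~\ref{subsec:total-counts}. We have $y_1\le D/n$ because the least element of $A_r$ is at most $D$, and $y_N>1-D/n$ because the largest element of $A_r$ exceeds $n-D$. To each point $y_j$ we attach the interval $I_j=[y_j-h,y_j]$. These intervals have length $h$, have disjoint interiors, and their union is $[y_1-h,y_N]$. This union is contained in $[-h,1]$, and it covers $[0,1]$ up to the two boundary pieces $[0,\max(y_1-h,0)]$ and $[y_N,1]$, each of length less than $h$.

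The main step is the following decomposition:
\[
\int_0^1 f(t)\,dt-hT_r(f)=\sum_{j}\Bigl(\int_{I_j\cap[0,1]}f(t)\,dt-h f(y_j)\Bigr)+\int_{[y_N,1]}f(t)\,dt .
\]
\emph{Interior cells.} For every $j$ with $I_j\subseteq[0,1]$ (all $j\ge2$, and $j=1$ when $y_1\ge h$), we have $|f(t)-f(y_j)|\le L_f|t-y_j|$ on $I_j$. Hence each such cell contributes at most $L_f h^2/2$ in absolute value. Summing over at most $N\le n/D+1$ cells gives a total of at most $L_f(h/2+h^2/2)$.

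\emph{Boundary terms.} These are the possibly truncated first cell and the tail $[y_N,1]$. Each is bounded crudely by $M_f$ times its length plus $hM_f$; alternatively, a Lipschitz comparison in the first cell gives a bound in terms of $L_f h^2$ together with a missing piece of length less than $h$.

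Collecting the estimates gives a bound of the form $(L_f c_1+M_f c_2)\,h$ with absolute constants. To reach the stated constant $L_f+M_f$ exactly, the edges need more careful accounting. First, the uncovered leftover piece on each side has length under $h$, but the two leftover pieces together have total length at most $h$, because $1-y_N$ and the first gap fit together modulo $h$. Second, the term $f(y_1)$ is either fully matched by its cell or, when $y_1<h$, the integral over $[0,y_1]$ replaces it, with error at most $L_f h\cdot y_1+M_f(h-y_1)$. I would also note $h^2\le h$ to absorb quadratic terms, assuming $n\ge D$. If $n<D$, the inequality is trivial, since $|\int f|\le M_f$ and $hT_r\le hM_f$ with $h>1$.

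The main obstacle is purely the bookkeeping at the two ends, so that the constant comes out as $L_f+M_f$ rather than a small multiple of it. Uniformity in $r$ is automatic, because none of the estimates depend on $r$ beyond the facts $y_1\le h$ and $1-y_N<h$. Finally, multiplying the precise inequality \eqref{eq:Riemann-precise} by $n/D$ gives \eqref{eq:Riemann-main} with $O_f(1)=L_f+M_f$.
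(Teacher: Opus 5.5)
\textbf{Gap: the sharp constant $L_f+M_f$ in \eqref{eq:Riemann-precise} is not reached.} Your decomposition is correct. Your crude estimates give a bound $(c_1L_f+c_2M_f)h$ with $c_2=2$, which suffices for \eqref{eq:Riemann-main} and for its use in Proposition~\ref{prop:transfer}. The failure is in the step meant to sharpen this to exactly $(L_f+M_f)h$.

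With your cells $[y_j-h,y_j]$, the left piece $[0,\max(y_1-h,0)]$ is always degenerate, since $y_1\le h$. The constant-type error comes instead from two sources: the \emph{deficiency} $h-y_1$ of the truncated first cell, and the uncovered tail $1-y_N$. Your triangle inequality charges $M_f\bigl((h-y_1)+(1-y_N)\bigr)$, and this can be close to $2M_fh$. For $D=10$, $n=100$, $r=1$ one has $y_1=0.01$, $y_N=0.91$, $h=0.1$, so the sum equals $0.18$. Taking $f\equiv1$ (so $L_f=0$, $M_f=1$), your estimates give $0.18$, while \eqref{eq:Riemann-precise} requires $0.1$; the true error is $0$.

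The remark that the end pieces ``fit together modulo $h$'' does not rescue this. In this very example $y_1+(1-y_N)=h$ exactly, but that controls $y_1+(1-y_N)$, not $(h-y_1)+(1-y_N)$. For constant $f$, the terms $-(h-y_1)f(y_1)$ and $\int_{y_N}^1f$ have opposite signs. A Lipschitz comparison cannot recover this cancellation, because $y_1$ and the tail are at distance about $1$, not $h$.

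\textbf{What is missing:} cells chosen so that each end contributes a length mismatch of at most $h/2$.
\begin{itemize}
\item The paper uses cells centered at the sample points: $I_0=[0,y_0+h/2]$, $I_j=[y_j-h/2,y_j+h/2]$, and $I_{N-1}=[y_{N-1}-h/2,1]$. Then $|t-y_j|\le h$ on every cell gives the $L_fh$ term. The bound $\sum_j\bigl||I_j|-h\bigr|\le h/2+h/2$ gives the $M_fh$ term.
\item Within your own framework, use right-endpoint cells when $y_1\ge 1-y_N$ and left-endpoint cells $[y_j,y_j+h]$ otherwise. The two mismatches $(h-y_1)+(1-y_N)$ and $y_1+\bigl(h-(1-y_N)\bigr)$ sum to $2h$, so the smaller one is at most $h$. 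For the Lipschitz part, replace your bounds $L_fhy_1$ and $N\le n/D+1$ with the exact boundary integral $\int_0^{y_1}|t-y_1|\,dt=y_1^2/2$ (or its mirror image at the right end) and $(N-1)h=y_N-y_1$. This keeps the Lipschitz part at most $L_fh/2$. As written, your $L_f$ bookkeeping can itself exceed $L_fh$.
\end{itemize}

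\textbf{Smaller gap: the case $n<D$ is not trivial when $N_r=1$.} Your triangle inequality there gives only $(1+h)M_f$, which can exceed $(L_f+M_f)h$. You need
\[
\Bigl|\int_0^1f-hf(y_1)\Bigr|\le \frac{L_f}{2}+(h-1)M_f,
\]
as in the paper's $N=1$ case.
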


\begin{proof}
Set \(h=D/n\), and let \(N=|Y_r|\).  If \(N\ge1\), write the elements of
\(Y_r\) in increasing order as \(y_0<y_1<\cdots<y_{N-1}\).

We first treat the cases \(N=0\) and \(N=1\).  If \(N=0\), then
\(T_r(f)=0\).  This can occur only when \(n<D\), hence \(h>1\).  Therefore
\[
\left|\int_0^1 f(t)\,dt-hT_r(f)\right|
=
\left|\int_0^1 f(t)\,dt\right|
\le M_f
\le (L_f+M_f)h.
\]
If \(N=1\), say \(Y_r=\{y_0\}\), then
\[
\begin{aligned}
\left|\int_0^1 f(t)\,dt-hf(y_0)\right|
&\le
\int_0^1 |f(t)-f(y_0)|\,dt
+
|1-h|\,|f(y_0)|  \\
&\le
L_f\int_0^1 |t-y_0|\,dt
+
M_f|1-h|.
\end{aligned}
\]
Since \(N=1\), necessarily \(n<2D\), and hence \(h=D/n>1/2\).  Also
\(\int_0^1 |t-y_0|\,dt\le 1/2\le h\) and \(|1-h|\le h\).  Hence
\[
\left|\int_0^1 f(t)\,dt-hT_r(f)\right|
\le (L_f+M_f)h.
\]

It remains to consider the case \(N\ge2\).  Since consecutive integers in a
fixed residue class modulo \(D\) differ by \(D\), consecutive sampling points
satisfy
\begin{equation}\label{eq:grid-spacing}
y_{j+1}-y_j=h
\qquad (0\le j\le N-2).
\end{equation}
We partition \([0,1]\) by Voronoi cells centered at the sampling points.
Define
\[
m_0=0,
\qquad
m_j=\frac{y_{j-1}+y_j}{2}\quad (1\le j\le N-1),
\qquad
m_N=1,
\]
and set \(I_j=[m_j,m_{j+1}]\) for \(0\le j\le N-1\).  Then
\([0,1]=\bigcup_{j=0}^{N-1}I_j\).

For every \(t\in I_j\), we have \(|t-y_j|\le h\).  Indeed, for
\(1\le j\le N-2\), \eqref{eq:grid-spacing} gives
\(I_j=[y_j-h/2,y_j+h/2]\).  For the left boundary cell,
\[
I_0=[0,(y_0+y_1)/2]\subseteq [y_0-h,y_0+h],
\]
because \(0<y_0\le h\) and \(y_1=y_0+h\).  The right boundary cell is
analogous, using \(0\le 1-y_{N-1}<h\).

Moreover, every interior cell has length \(h\), and only the two boundary
cells can have length different from \(h\).  Since
\[
|I_0|=y_0+\frac h2,
\qquad
|I_{N-1}|=1-y_{N-1}+\frac h2,
\]
we have
\[
\bigl||I_0|-h\bigr|
\le \frac h2,
\qquad
\bigl||I_{N-1}|-h\bigr|
\le \frac h2.
\]
Thus
\begin{equation}\label{eq:length-deviation}
\sum_{j=0}^{N-1}\bigl||I_j|-h\bigr|\le h.
\end{equation}

By Lipschitz continuity,
\[
\left|\int_{I_j}f(t)\,dt-|I_j|f(y_j)\right|
\le
\int_{I_j}|f(t)-f(y_j)|\,dt
\le
L_fh|I_j|.
\]
Summing over \(j\), we get
\begin{equation}\label{eq:int-vs-weighted}
\left|
\int_0^1 f(t)\,dt
-
\sum_{j=0}^{N-1}|I_j|f(y_j)
\right|
\le L_fh.
\end{equation}
Next, by \eqref{eq:length-deviation} and \(|f(y_j)|\le M_f\),
\begin{equation}\label{eq:weighted-vs-riemann}
\left|
\sum_{j=0}^{N-1}|I_j|f(y_j)
-
h\sum_{j=0}^{N-1}f(y_j)
\right|
\le M_fh.
\end{equation}
Combining \eqref{eq:int-vs-weighted} and \eqref{eq:weighted-vs-riemann}, we
obtain
\[
\left|\int_0^1 f(t)\,dt-hT_r(f)\right|
\le (L_f+M_f)h.
\]
Since \(h=D/n\), this is \eqref{eq:Riemann-precise}.  Multiplying by
\(h^{-1}=n/D\) gives \eqref{eq:Riemann-main}.
\end{proof}

The uniformity in the residue class allows us to apply the estimate twice,
first in one endpoint variable and then in the other, without losing control of
the error term.

\subsection{Two-dimensional discrete sums versus the continuous functional}

Recall that \(\Mord_Q(\chi)\) denotes the number of monochromatic ordered
\(Q\)-constellations in \([n]\) under a coloring \(\chi\), while
\(M_Q(\chi)\) denotes the number of monochromatic \(Q\)-constellations in
\([n]\).  For the product coloring \(\chi_{n,b}\), define
\[
\Mord_Q(n;b)=\E\!\left[\Mord_Q(\chi_{n,b})\right],
\qquad
M_Q(n;b)=\E\!\left[M_Q(\chi_{n,b})\right].
\]

We can now establish a precise correspondence between the expected number of monochromatic $Q$-constellations in the discrete setting and the continuous functional $\Phi_Q(b)$, which forms the core result of the following proposition.
\begin{proposition}\label{prop:transfer}
If \(b:[0,1]\to[-1,1]\) is Lipschitz, then
\begin{equation}\label{eq:transfer-ord}
\Mord_Q(n;b)=\frac{n^2}{D}\,\Phi_Q(b)+O_{Q,b}(n).
\end{equation}
If \(Q\) is nonsymmetric, then
\begin{equation}\label{eq:transfer-act-nonsym}
M_Q(n;b)=\Mord_Q(n;b)=\frac{n^2}{D}\,\Phi_Q(b)+O_{Q,b}(n).
\end{equation}
If \(Q\) is symmetric, then
\begin{equation}\label{eq:transfer-act-sym}
M_Q(n;b)=\frac12\Mord_Q(n;b)=\frac{n^2}{2D}\,\Phi_Q(b)+O_{Q,b}(n).
\end{equation}
\end{proposition}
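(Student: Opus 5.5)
We write the expected ordered count as a sum over endpoint pairs and then apply Lemma~\ref{lem:Riemann} twice, once in each endpoint. By linearity of expectation and independence of the colors (the points \(x_i(p,q)\) are distinct by Lemma~\ref{lem:distinct}),
\[
\Mord_Q(n;b)=\sum_{r=0}^{D-1}\sum_{\substack{p,q\in A_r\\ p\ne q}} F\!\left(\frac pn,\frac qn\right),
\qquad
F(x,y)=\prod_{i=0}^k\frac{1+b(L_i(x,y))}{2}+\prod_{i=0}^k\frac{1-b(L_i(x,y))}{2},
\]
using \(x_i(p,q)/n=L_i(p/n,q/n)\). Since \(0\le F\le 1\), the diagonal terms \(p=q\) contribute at most \(\sum_r N_r=n\). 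We may therefore add them back at cost \(O(n)\) and sum over all \((x,y)\in Y_r\times Y_r\).

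Next we check regularity. Each \(L_i\) is \(1\)-Lipschitz in each variable separately. The function \(b\) is bounded by \(1\) and Lipschitz. Each factor \((1\pm b)/2\) lies in \([0,1]\), so a product of \(k+1\) such factors is Lipschitz with constant at most \((k+1)L_b/2\). Hence \(F\) is bounded by \(1\) and Lipschitz in each variable, with constant \(L=(k+1)L_b\) uniform in the other variable.

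We then fix \(r\) and \(x\in Y_r\), and apply \eqref{eq:Riemann-precise} to \(y\mapsto F(x,y)\):
\[
\sum_{y\in Y_r}F(x,y)=\frac nD G(x)+O(1),
\qquad G(x)=\int_0^1F(x,y)\,dy,
\]
where the \(O(1)\) is bounded by \(L+1\) uniformly in \(x\) and \(r\). This uniformity is the point of the precise form of the lemma. Summing over the \(N_r=O(n)\) values of \(x\) gives \(\frac nD\sum_{x\in Y_r}G(x)+O(n)\). The function \(G\) is Lipschitz with constant at most \(L\) and bounded by \(1\), so a second application of Lemma~\ref{lem:Riemann} gives \(\sum_{x\in Y_r}G(x)=\frac nD\Phi_Q(b)+O(1)\). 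Therefore each residue class contributes \(\frac{n^2}{D^2}\Phi_Q(b)+O(n)\). Summing over the \(D\) classes yields \eqref{eq:transfer-ord}.

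For the unordered counts we use Lemma~\ref{lem:actual-ordered}. The map from ordered to unordered \(Q\)-constellations is a bijection in the nonsymmetric case and exactly two-to-one in the symmetric case. Monochromaticity depends only on the underlying set, so \(\Mord_Q(\chi)=M_Q(\chi)\) or \(\Mord_Q(\chi)=2M_Q(\chi)\) pointwise for every coloring \(\chi\). Taking expectations gives \eqref{eq:transfer-act-nonsym} and \eqref{eq:transfer-act-sym}.

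The only delicate step is making sure the inner Riemann error is uniform in the frozen variable \(x\). Otherwise summing \(O(n)\) inner errors could cost more than \(O(n)\). This is handled by the explicit bound \((L_f+M_f)D/n\) in Lemma~\ref{lem:Riemann}, together with the uniform Lipschitz bounds on \(F\) and \(G\) established above.
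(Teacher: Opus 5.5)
Your proof is correct and is essentially the paper's argument. You split by residue class, add the diagonal back at $O(n)$ cost, apply Lemma~\ref{lem:Riemann} in one endpoint with an error uniform in the frozen variable, apply it again to the marginal integral, and finish with Lemma~\ref{lem:actual-ordered}. The only differences are cosmetic: you freeze $x$ and sum in $y$ first (the paper does the reverse), and you give the explicit Lipschitz constant $(k+1)L_b$ where the paper uses an unspecified constant $C_{Q,b}$.
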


\begin{proof}
Define
\[
H_b(x,y)=
\prod_{i=0}^k\frac{1+b(L_i(x,y))}{2}
+
\prod_{i=0}^k\frac{1-b(L_i(x,y))}{2}.
\]
Fix endpoints \(p\ne q\) with \(p\equiv q\pmod D\).  By
Lemma~\ref{lem:distinct}, the points \(x_0(p,q),\dots,x_k(p,q)\) are
distinct, so their colors are independent.  The probability that this ordered
copy is monochromatic is therefore
$H_b\!\left(\frac pn,\frac qn\right)$,
because
\[
\frac{x_i(p,q)}{n}
=
\left(1-\frac{a_i}{D}\right)\frac pn+\frac{a_i}{D}\frac qn
=
L_i\!\left(\frac pn,\frac qn\right).
\]
Hence
\[
\Mord_Q(n;b)=
\sum_{\substack{1\le p,q\le n\\ p\ne q,\ p\equiv q\pmod D}}
H_b\!\left(\frac pn,\frac qn\right).
\]
Since \(0\le H_b(x,y)\le1\), adding the diagonal \(p=q\) changes the sum by
\(O(n)\).  Thus
\[
\Mord_Q(n;b)=
\sum_{\substack{1\le p,q\le n\\ p\equiv q\pmod D}}
H_b\!\left(\frac pn,\frac qn\right)+O(n).
\]
For each residue class \(r\in\{0,\dots,D-1\}\), define
\[
S_r=
\sum_{\substack{1\le p,q\le n\\ p\equiv q\equiv r\pmod D}}
H_b\!\left(\frac pn,\frac qn\right).
\]
Then
\begin{equation}\label{eq:Sr-sum}
\Mord_Q(n;b)=\sum_{r=0}^{D-1}S_r+O(n).
\end{equation}

We now compare each \(S_r\) with the corresponding integral.  Since \(b\) is
Lipschitz and the maps \(L_i\) are affine, each composition \(b\circ L_i\) is
Lipschitz on \([0,1]^2\).  Because these functions are bounded by \(1\),
finite sums and products of them remain Lipschitz.  Hence \(H_b\) is
Lipschitz on \([0,1]^2\); let \(C_{Q,b}\) be a Lipschitz constant for
\(H_b\).

Fix \(y\in[0,1]\), and set \(f_y(x)=H_b(x,y)\).  The Lipschitz constant and
supremum norm of \(f_y\) are bounded independently of \(y\).  Applying
Lemma~\ref{lem:Riemann} to \(f_y\), uniformly in \(y\), gives for every
residue class \(r\)
\begin{equation}\label{eq:first-Riemann}
\sum_{\substack{1\le p\le n\\ p\equiv r\pmod D}}
H_b\!\left(\frac pn,y\right)
=
\frac{n}{D}\int_0^1H_b(x,y)\,dx+O_{Q,b}(1).
\end{equation}
Summing \eqref{eq:first-Riemann} over \(y\in Y_r\), we obtain
\[
S_r=
\sum_{y\in Y_r}\sum_{\substack{1\le p\le n\\ p\equiv r\ ({\rm mod}\ D)}} H_b\!\left(\frac pn,y\right)
=
\frac{n}{D}\sum_{y\in Y_r}\int_0^1 H_b(x,y)\,dx+O_{Q,b}(|Y_r|).
\]
Since \(|Y_r|=n/D+O_D(1)\), this becomes
\begin{equation}\label{eq:Sr-G}
S_r
=
\frac{n}{D}\sum_{y\in Y_r}G(y)+O_{Q,b}(n),
\qquad
G(y)=\int_0^1H_b(x,y)\,dx.
\end{equation}
The function \(G\) is Lipschitz.  Indeed, for \(y,y'\in[0,1]\),
\[
\begin{aligned}
|G(y)-G(y')|
&=\left|\int_0^1\bigl(H_b(x,y)-H_b(x,y')\bigr)\,dx\right|\\
&\le \int_0^1|H_b(x,y)-H_b(x,y')|\,dx\\
&\le \int_0^1 C_{Q,b}|y-y'|\,dx\\
&=C_{Q,b}|y-y'|.
\end{aligned}
\]
Applying Lemma~\ref{lem:Riemann} once more, now to \(G\), yields
\begin{equation}\label{eq:second-Riemann}
\sum_{y\in Y_r}G(y)
=
\frac{n}{D}\int_0^1G(y)\,dy+O_{Q,b}(1).
\end{equation}
Substituting \eqref{eq:second-Riemann} into \eqref{eq:Sr-G} gives
\[
S_r
=
\frac{n^2}{D^2}\int_0^1G(y)\,dy+O_{Q,b}(n)
=
\frac{n^2}{D^2}\Phi_Q(b)+O_{Q,b}(n).
\]
Summing over \(r=0,\dots,D-1\) in \eqref{eq:Sr-sum} gives
\eqref{eq:transfer-ord}.

Finally, Lemma~\ref{lem:actual-ordered} shows that ordered
\(Q\)-constellations are in bijection with underlying \(Q\)-constellations in
the nonsymmetric case, while the map from ordered constellations to underlying
constellations is two-to-one in the symmetric case.  This gives
\eqref{eq:transfer-act-nonsym} and \eqref{eq:transfer-act-sym}.
\end{proof}

Thus Proposition~\ref{prop:transfer} reduces the problem to constructing a
Lipschitz bias \(b:[0,1]\to[-1,1]\) with \(\Phi_Q(b)<2^{-k}\).  Any such fixed
\(b\) gives, for all sufficiently large \(n\), a product coloring whose
expected monochromatic density is strictly below the random-coloring
coefficient \(\gamma_{\rand}(Q)\).

\subsection{Second variation at the random point}

We next analyze the functional \(\Phi_Q\) near the unbiased point
\(b\equiv0\), which corresponds to the uniformly random coloring.  The aim is
to identify a perturbation direction along which \(\Phi_Q\) decreases.  Fix a
measurable function \(g:[0,1]\to[-1,1]\), let \(\lambda\in\R\) with
\(|\lambda|\le1\), and set \(b_\lambda=\lambda g\).  Then \(b_\lambda\) is an
admissible bias function.  We expand \(\Phi_Q(b_\lambda)\) as
\(\lambda\to0\); the first nonconstant term is a quadratic form in \(g\).
Thus a function \(g\) for which this quadratic form is negative will give
\(\Phi_Q(\lambda g)<2^{-k}\) for all sufficiently small positive \(\lambda\).

Define the quadratic form
\begin{equation}\label{eq:T2}
T_2(g)=
\sum_{0\le i<j\le k}
\int_0^1\!\int_0^1 g(L_i(x,y))g(L_j(x,y))\,dy\,dx.
\end{equation}

\begin{lemma}\label{lem:even-expansion}
For every measurable \(g:[0,1]\to[-1,1]\),
\begin{equation}\label{eq:Phi-expansion}
\Phi_Q(\lambda g)=2^{-k}+2^{-k}\lambda^2T_2(g)+O_k(\lambda^4),
\qquad |\lambda|\le 1,
\end{equation}
where the implicit constant depends only on \(k\).
\end{lemma}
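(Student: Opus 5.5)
The plan is to expand the integrand of \eqref{eq:Phi} pointwise as a polynomial in \(\lambda\), observe that the odd-degree terms cancel, and then integrate term by term.

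Fix \((x,y)\in[0,1]^2\) and write \(z_i=g(L_i(x,y))\in[-1,1]\). The integrand of \(\Phi_Q(\lambda g)\) is
\[
H(\lambda)=2^{-(k+1)}\Bigl(\prod_{i=0}^k(1+\lambda z_i)+\prod_{i=0}^k(1-\lambda z_i)\Bigr).
\]
Expanding both products with elementary symmetric polynomials gives
\[
\prod_{i=0}^k(1\pm\lambda z_i)=\sum_{m=0}^{k+1}(\pm\lambda)^m e_m(z_0,\dots,z_k).
\]
Adding the two expansions cancels every odd \(m\) and doubles every even \(m\), so
\[
H(\lambda)=2^{-k}\sum_{m\ \mathrm{even}}\lambda^m e_m(z).
\]
The \(m=0\) term is \(2^{-k}\), and the \(m=2\) term is \(2^{-k}\lambda^2\sum_{i<j}z_iz_j\).

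For the remainder, use \(|z_i|\le1\) to get \(|e_m(z)|\le\binom{k+1}{m}\). Since \(|\lambda|\le1\), we have \(\lambda^m\le\lambda^4\) for every even \(m\ge4\). Hence the terms with \(m\ge4\) are bounded in absolute value by \(2^{-k}\lambda^4\sum_{m\ge4}\binom{k+1}{m}\le\lambda^4\cdot 2\). This bound is uniform in \((x,y)\) and depends only on \(k\).

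Finally, integrate over \([0,1]^2\). All terms are bounded and measurable, because \(g\) is measurable and each \(L_i\) is affine, so Fubini applies. The constant term contributes \(2^{-k}\), the quadratic term contributes exactly \(2^{-k}\lambda^2T_2(g)\) by \eqref{eq:T2}, and the remainder integrates to at most \(2\lambda^4\) in absolute value. This gives \eqref{eq:Phi-expansion}.

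There is no real obstacle. The only point needing care is that the error bound is uniform in \((x,y)\) and in \(g\), and this follows from \(|g|\le1\).
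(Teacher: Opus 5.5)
Your proposal is correct and matches the paper's proof essentially step for step. You expand both products pointwise (your elementary symmetric polynomials are the paper's sums over subsets $S$), cancel the odd-degree terms, identify the degree-$0$ and degree-$2$ terms with $2^{-k}$ and $2^{-k}\lambda^2T_2(g)$, and bound the even terms of degree at least $4$ uniformly using $|z_i|\le1$ and $|\lambda|\le1$; your cruder bound $2^{-k}\sum_{m\ge4}\binom{k+1}{m}\le 2$ is valid and even shows the implicit constant can be taken absolute.
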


\begin{proof}
Fix \((x,y)\), and write \(g_i=g(L_i(x,y))\) for \(0\le i\le k\).  The
integrand in \eqref{eq:Phi} is
\[
2^{-(k+1)}
\left(
\prod_{i=0}^k(1+\lambda g_i)
+
\prod_{i=0}^k(1-\lambda g_i)
\right).
\]
Expanding the two products gives
\[
\prod_{i=0}^k(1+\lambda g_i)
=
\sum_{S\subseteq\{0,\dots,k\}}
\lambda^{|S|}\prod_{i\in S}g_i
\]
and
\[
\prod_{i=0}^k(1-\lambda g_i)
=
\sum_{S\subseteq\{0,\dots,k\}}
(-1)^{|S|}\lambda^{|S|}\prod_{i\in S}g_i.
\]
Adding the two expansions cancels every odd-degree term and leaves
\[
\prod_{i=0}^k(1+\lambda g_i)
+
\prod_{i=0}^k(1-\lambda g_i)
=
2\sum_{\substack{S\subseteq\{0,\dots,k\}\\ |S|\ \mathrm{even}}}
\lambda^{|S|}\prod_{i\in S}g_i.
\]
The terms of degrees \(0\) and \(2\) are
\[
2+2\lambda^2\sum_{0\le i<j\le k}g_i g_j.
\]
After integration over \([0,1]^2\), these contribute \(2^{-k}\) and
\(2^{-k}\lambda^2T_2(g)\), respectively.  The remaining terms have even degree
at least \(4\).  Since \(|g_i|\le1\), their total contribution is bounded in
absolute value by
\[
2^{-k}
\sum_{r\ge2}\binom{k+1}{2r}|\lambda|^{2r}
\le
2^{-k}\left(\sum_{r\ge2}\binom{k+1}{2r}\right)|\lambda|^4,
\]
for \(|\lambda|\le1\).  This is \(O_k(\lambda^4)\), and the lemma follows.
\end{proof}

Consequently, if \(T_2(g)<0\), then \(\Phi_Q(\lambda g)<2^{-k}\) for all
sufficiently small \(\lambda>0\).  Thus it remains to construct an admissible
direction \(g\) with negative quadratic variation.

\section{An explicit negative direction}
\label{sec:negative-direction}

We now construct a smooth admissible direction \(g\) with \(T_2(g)<0\).  Set
\(u(x)=\cos(2\pi D x)\) and \(v(x)=\cos(2\pi(D+1)x)\).  The mode \(u\) is a
zero direction for \(T_2\), while the mixed bilinear term between \(u\) and the
adjacent mode \(v\) has a fixed positive sign.  A small perturbation of \(u\)
in the direction \(-v\) will therefore give the desired negative direction.

We first show that the \(D\)-frequency mode is a zero direction.

\begin{lemma}\label{lem:T2u-zero}
We have \(T_2(u)=0\).
\end{lemma}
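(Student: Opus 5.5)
We need \(T_2(u)=\sum_{i<j}\int_0^1\int_0^1 u(L_i)u(L_j)\,dy\,dx\) with \(u(t)=\cos(2\pi Dt)\). The plan is to show that every single pair integral vanishes. Fix \(0\le i<j\le k\). Since \(L_i(x,y)=x+q_i(y-x)\), we have
\[
2\pi D L_i(x,y)=2\pi\bigl((D-a_i)x+a_iy\bigr),
\]
and both coefficients \(D-a_i\) and \(a_i\) are integers. The product-to-sum formula \(\cos\alpha\cos\beta=\tfrac12[\cos(\alpha+\beta)+\cos(\alpha-\beta)]\) gives
\[
u(L_i)u(L_j)=\tfrac12\cos\bigl(2\pi(\alpha x+\beta y)\bigr)+\tfrac12\cos\bigl(2\pi(\alpha' x+\beta' y)\bigr),
\]
where \(\alpha=2D-a_i-a_j\), \(\beta=a_i+a_j\), \(\alpha'=a_j-a_i\), and \(\beta'=a_i-a_j\). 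All four of these are integers.

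Next we use the basic fact that for integers \((m,l)\ne(0,0)\),
\[
\int_0^1\int_0^1\cos\bigl(2\pi(mx+ly)\bigr)\,dy\,dx=0.
\]
To see this, expand the cosine as \(\cos(2\pi mx)\cos(2\pi ly)-\sin(2\pi mx)\sin(2\pi ly)\). The integral factorizes. Each factor \(\int_0^1\cos(2\pi mx)\,dx\) vanishes unless \(m=0\), and each sine integral always vanishes. So it suffices to check that neither frequency vector is zero.

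For the difference term, \(\beta'=a_i-a_j\ne0\) because \(a_i<a_j\). For the sum term, \(\alpha+\beta=2D>0\), so \(\alpha\) and \(\beta\) cannot both be zero. Hence both frequency pairs are nonzero, every pair integral vanishes, and summing over \(i<j\) gives \(T_2(u)=0\).

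The only point requiring care is that the frequencies are integers. This is exactly where the choice of frequency \(D\) enters: it clears the denominators of the \(q_i\). Nothing here depends on the symmetry type of \(Q\) or on \(k\ge2\).
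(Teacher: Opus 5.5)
Your proof is correct and follows the paper's argument essentially step for step. Both use the product-to-sum formula, note that the factor $D$ makes the frequency vectors integral, and conclude that each pair integral vanishes because both frequency vectors are nonzero. The differences are cosmetic: you factor the double integral with the cosine addition formula rather than exponentials, and you rule out a zero sum-frequency via $\alpha+\beta=2D$ instead of $a_i+a_j\ge a_j>0$.
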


\begin{proof}
For \(0\le i<j\le k\), set
\[
I_{i,j}
=
\int_0^1\!\int_0^1
u(L_i(x,y))u(L_j(x,y))\,dy\,dx.
\]
It suffices to show that \(I_{i,j}=0\) for every \(i<j\).  Since
\(q_i=a_i/D\),
\[
D L_i(x,y)=(D-a_i)x+a_i y,
\]
and therefore
\[
u(L_i(x,y))=\cos\bigl(2\pi((D-a_i)x+a_i y)\bigr),
\qquad
u(L_j(x,y))=\cos\bigl(2\pi((D-a_j)x+a_j y)\bigr).
\]
Let
\[
A=2\pi((D-a_i)x+a_i y),
\qquad
B=2\pi((D-a_j)x+a_j y).
\]
Using \(\cos A\cos B=\frac12\cos(A-B)+\frac12\cos(A+B)\), we get
\[
I_{i,j}
=
\frac12\int_0^1\!\int_0^1\cos(A-B)\,dy\,dx
+
\frac12\int_0^1\!\int_0^1\cos(A+B)\,dy\,dx.
\]
For the first integral,
$A-B=2\pi\bigl((a_j-a_i)x+(a_i-a_j)y\bigr)$,
and the frequency vector \((a_j-a_i,a_i-a_j)\) is nonzero because
\(a_i\ne a_j\).  For the second integral,
$A+B=2\pi\bigl((2D-a_i-a_j)x+(a_i+a_j)y\bigr)$,
and the frequency vector \((2D-a_i-a_j,a_i+a_j)\) is nonzero because
\(j\ge1\) and hence \(a_i+a_j\ge a_j>0\).

In each case, writing the cosine in exponential form reduces the integral to
a linear combination of terms
\[
\int_0^1\!\int_0^1 e^{2\pi \mathrm i(mx+ny)}\,dy\,dx
=
\left(\int_0^1e^{2\pi \mathrm i m x}\,dx\right)
\left(\int_0^1e^{2\pi \mathrm i n y}\,dy\right),
\]
where \(m,n\in\Z\) and \((m,n)\ne(0,0)\).  At least one of the two
one-dimensional integrals vanishes.  Thus both integrals above are zero, and
therefore \(I_{i,j}=0\).  Summing over all \(0\le i<j\le k\), we obtain
\(T_2(u)=0\).
\end{proof}

We next calculate the mixed correlations of \(u\) and \(v\).  Set
\begin{equation}\label{eq:J-def}
J(\theta)=
\int_0^1 e^{2\pi \mathrm i \theta x}\,dx
=
\begin{cases}
\dfrac{e^{2\pi \mathrm i\theta}-1}{2\pi \mathrm i\theta}, & \theta\ne0,\\[8pt]
1, & \theta=0.
\end{cases}
\end{equation}

\begin{lemma}\label{lem:J-product}
If \(N\in\Z\), then for every real \(\theta\),
\begin{equation}\label{eq:J-product}
J(\theta)J(N-\theta)
=
-\frac{\sin^2(\pi\theta)}{\pi^2\theta(N-\theta)},
\end{equation}
where the right-hand side is understood by continuous extension at
\(\theta=0\) and \(\theta=N\).
\end{lemma}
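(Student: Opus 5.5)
We prove \eqref{eq:J-product} by direct computation from the closed form \eqref{eq:J-def}. We treat the generic case \(\theta\notin\{0,N\}\) first and handle the two exceptional points by continuity at the end.

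For \(\theta\ne0\) and \(\theta\ne N\), both factors are given by the first line of \eqref{eq:J-def}. The key observation is that \(N\in\Z\) gives \(e^{2\pi\mathrm i(N-\theta)}=e^{-2\pi\mathrm i\theta}\), so that
\[
J(N-\theta)=\frac{e^{-2\pi\mathrm i\theta}-1}{2\pi\mathrm i(N-\theta)}.
\]
Multiplying the two factors, the numerator becomes
\[
(e^{2\pi\mathrm i\theta}-1)(e^{-2\pi\mathrm i\theta}-1)=2-2\cos(2\pi\theta)=4\sin^2(\pi\theta),
\]
that is, \(|e^{2\pi\mathrm i\theta}-1|^2\). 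The denominator is \((2\pi\mathrm i)^2\theta(N-\theta)=-4\pi^2\theta(N-\theta)\). Dividing gives exactly \(-\sin^2(\pi\theta)/(\pi^2\theta(N-\theta))\). One could instead factor \(e^{2\pi\mathrm i\theta}-1=2\mathrm i\,e^{\pi\mathrm i\theta}\sin(\pi\theta)\) in each factor and note that the phases \(e^{\pi\mathrm i\theta}\) and \(e^{\pi\mathrm i(N-\theta)}\) combine to \(e^{\pi\mathrm i N}\). The conjugation route is cleaner, since it avoids tracking the sign \((-1)^N\).

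Next come the exceptional points. At \(\theta=0\) the left side equals \(J(0)J(N)=1\cdot J(N)\). If \(N\ne0\), then \(J(N)=0\) because \(e^{2\pi\mathrm iN}=1\). The right side tends to \(0\) as \(\theta\to0\), since \(\sin^2(\pi\theta)/\theta\to0\) while \(N-\theta\to N\ne0\). If \(N=0\), the left side is \(J(0)^2=1\), and the right side is \(\sin^2(\pi\theta)/(\pi^2\theta^2)\to1\). The case \(\theta=N\) is symmetric under \(\theta\mapsto N-\theta\), because both sides of \eqref{eq:J-product} are invariant under this swap. In every case the identity therefore holds with the continuous extension of the right-hand side, as stated. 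Alternatively, \(J\) is continuous (indeed entire) in \(\theta\), so the identity extends from the dense generic set automatically.

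There is no real obstacle here. The only point requiring care is the use of \(N\in\Z\) to identify \(e^{2\pi\mathrm i(N-\theta)}\) with the conjugate of \(e^{2\pi\mathrm i\theta}\), together with the bookkeeping of the sign coming from \(\mathrm i^2=-1\).
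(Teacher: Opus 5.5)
Your proof is correct and follows the paper's argument: for $\theta\notin\{0,N\}$ you use $e^{2\pi\mathrm i N}=1$ to turn the numerator into $4\sin^2(\pi\theta)$ and divide by $(2\pi\mathrm i)^2\theta(N-\theta)=-4\pi^2\theta(N-\theta)$, exactly as the paper does. The paper only says the endpoint cases follow ``by taking limits''; your check of $\theta=0$ (splitting $N=0$ from $N\neq0$), your symmetry reduction $\theta\mapsto N-\theta$, and your remark that $J$ is continuous spell that step out.
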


\begin{proof}
Assume first that \(\theta\ne0,N\).  By \eqref{eq:J-def},
\[
J(\theta)J(N-\theta)
=
\frac{(e^{2\pi \mathrm i\theta}-1)(e^{2\pi \mathrm i(N-\theta)}-1)}
{(2\pi \mathrm i\theta)(2\pi \mathrm i(N-\theta))}.
\]
Since \(N\in\Z\), we have \(e^{2\pi \mathrm iN}=1\), and hence
\(e^{2\pi \mathrm i(N-\theta)}=e^{-2\pi \mathrm i\theta}\).  Therefore the
numerator equals
\[
(e^{2\pi \mathrm i\theta}-1)(e^{-2\pi \mathrm i\theta}-1)
=
2-\bigl(e^{2\pi \mathrm i\theta}+e^{-2\pi \mathrm i\theta}\bigr)
=
4\sin^2(\pi\theta).
\]
Since \((2\pi \mathrm i)^2=-4\pi^2\), the denominator is
\(-4\pi^2\theta(N-\theta)\), and \eqref{eq:J-product} follows.  The endpoint
cases are obtained by taking limits.
\end{proof}

\subsection{Mixed correlations}

Let \(\Re(z)\) denote the real part of the complex number \(z\), that is, if
\(z=\alpha+\beta\mathrm i\) with \(\alpha,\beta\in\R\), then
\(\Re(z)=\alpha\).  For integers \(0\le s<t\le D\), write
\[
L_s(x,y)=\left(1-\frac{s}{D}\right)x+\frac{s}{D}y.
\]
This agrees with the previous notation when \(s=a_i\).  Define
\[
U_{s,t}=
\int_0^1\!\int_0^1 u(L_s(x,y))v(L_t(x,y))\,dy\,dx.
\]

\begin{lemma}\label{lem:Uab}
For every \(0\le s<t\le D\),
\begin{align}\label{eq:Uab}
U_{s,t}
&=
\frac{\sin^2(\pi t/D)}{2\pi^2}
\left[
\frac{1}{(t-s+t/D)(t-s-1+t/D)}
\right. \notag\\
&\hspace{3.6cm}\left.
-
\frac{1}{(s+t+t/D)(2D-s-t+1-t/D)}
\right].
\end{align}
In particular, \(U_{s,t}>0\) when \(t<D\), while \(U_{s,D}=0\).
\end{lemma}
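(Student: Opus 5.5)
The plan is a direct computation: use product-to-sum on \(u(L_s)v(L_t)\), factor each resulting two-dimensional exponential integral as a product of two one-dimensional integrals \(J(\cdot)\), and evaluate those products with Lemma~\ref{lem:J-product}. Set
\[
\theta=\frac{(D+1)t}{D}=t+\frac tD .
\]

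\textbf{Phases.} The phases are
\[
A=2\pi\bigl((D-s)x+sy\bigr),\qquad
B=2\pi(D+1)L_t(x,y)=2\pi\bigl((D+1-\theta)x+\theta y\bigr).
\]
Then \(u(L_s)v(L_t)=\tfrac12\cos(A-B)+\tfrac12\cos(A+B)\). Each \(\int\!\!\int\cos(2\pi(mx+ny))\,dy\,dx\) equals \(\Re\bigl(J(m)J(n)\bigr)\), even for non-integer real \(m,n\).

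\textbf{The difference term.} Put \(\alpha=\theta-s=t-s+t/D\). The \(x\)-frequency of \(A-B\) is \(\alpha-1\) and its \(y\)-frequency is \(-\alpha\). Apply Lemma~\ref{lem:J-product} with \(N=-1\) and argument \(-\alpha\):
\[
J(-\alpha)J(\alpha-1)=\frac{\sin^2(\pi\alpha)}{\pi^2\alpha(\alpha-1)}.
\]
This is real. Since \(t-s\in\Z\), we have \(\sin^2(\pi\alpha)=\sin^2(\pi t/D)\).

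\textbf{The sum term.} Put \(\beta=s+\theta=s+t+t/D\). The \(y\)-frequency of \(A+B\) is \(\beta\) and its \(x\)-frequency is \(2D+1-\beta\). Lemma~\ref{lem:J-product} with \(N=2D+1\) gives
\[
J(\beta)J(2D+1-\beta)=-\frac{\sin^2(\pi t/D)}{\pi^2\,\beta\,(2D+1-\beta)} .
\]
This is again real. Multiplying both terms by \(\tfrac12\) and adding yields \eqref{eq:Uab}.

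\textbf{No continuous extension needed.} All four denominator factors are nonzero. For \(\alpha-1=t-s-1+t/D\): if \(t<D\) it is not an integer, hence nonzero; if \(t=D\) it equals \(D-s\ge1\). Similarly \(2D+1-\beta\ge D-s-t/D+\dots>0\); in the case \(t=D\) it equals \(D-s\ge1\).

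\textbf{Sign.} If \(t=D\), the prefactor \(\sin^2(\pi)=0\), so \(U_{s,D}=0\). If \(t<D\), the prefactor is strictly positive, so it remains to show the bracket is positive, i.e.
\[
0<\alpha(\alpha-1)<\beta(2D+1-\beta).
\]
First, \(t-s\ge1\) and \(t/D>0\) give \(\alpha>1\), so the left-hand product is positive. Next, \(\beta-\alpha=2s\ge0\), so \(\beta\ge\alpha\). Finally,
\[
(2D+1-\beta)-(\alpha-1)=2D+2-2\theta=2(D+1)(1-t/D)>0 .
\]
Thus each factor on the right strictly dominates (or equals, in the first case) the corresponding positive factor on the left, and the inequality follows.

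The only delicate point is bookkeeping the frequencies so that each pair matches the form \(J(\theta)J(N-\theta)\) with integer \(N\); once that is done, the signs follow from the elementary comparisons above.
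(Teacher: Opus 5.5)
Your proof is correct and follows the paper's argument essentially verbatim: product-to-sum, factoring each double integral into a product \(J(\cdot)J(\cdot)\) whose arguments sum to an integer (\(-1\) and \(2D+1\)), applying Lemma~\ref{lem:J-product}, and comparing factors via \(\beta-\alpha=2s\ge0\) and \((2D+1-\beta)-(\alpha-1)=2(D+1)(1-t/D)>0\). These two comparisons are exactly the paper's \(P_2-P_1\) and \(Q_2-Q_1\). Your explicit check that no denominator vanishes goes slightly beyond the paper, which omits it for \(t=D\); the elided bound there should simply read \(2D+1-\beta\ge 1\), since \(s+t\le 2D-1\) and \(t/D\le1\).
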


\begin{proof}
Set
\[
A=2\pi D L_s(x,y),
\qquad
B=2\pi(D+1)L_t(x,y).
\]
Then
\[
u(L_s)=\frac{e^{\mathrm iA}+e^{-\mathrm iA}}2,
\qquad
v(L_t)=\frac{e^{\mathrm iB}+e^{-\mathrm iB}}2.
\]
Hence
\[
U_{s,t}
=
\frac12\Re\left(
\int_0^1\!\int_0^1 e^{\mathrm i(A+B)}\,dy\,dx
+
\int_0^1\!\int_0^1 e^{\mathrm i(A-B)}\,dy\,dx
\right).
\]
Write
\[
A+B=2\pi(\beta_+x+\alpha_+y),
\qquad
A-B=2\pi(\beta_-x+\alpha_-y),
\]
where
\[
\alpha_+=s+t+\frac tD,
\qquad
\beta_+=2D-s-t+1-\frac tD,
\]
and
\[
\alpha_-=s-t-\frac tD,
\qquad
\beta_-=t-s-1+\frac tD.
\]
Then the two integrals are \(J(\alpha_+)J(\beta_+)\) and
\(J(\alpha_-)J(\beta_-)\), respectively.  Moreover,
\[
\alpha_++\beta_+=2D+1\in\Z,
\qquad
\alpha_-+\beta_-=-1\in\Z.
\]
By Lemma~\ref{lem:J-product},
\[
J(\alpha_+)J(\beta_+)
=
-\frac{\sin^2(\pi\alpha_+)}{\pi^2\alpha_+\beta_+},
\qquad
J(\alpha_-)J(\beta_-)
=
-\frac{\sin^2(\pi\alpha_-)}{\pi^2\alpha_-\beta_-}.
\]
Since \(s,t\in\Z\),
\[
\sin^2(\pi\alpha_+)=\sin^2\left(\pi\frac tD\right),
\qquad
\sin^2(\pi\alpha_-)=\sin^2\left(\pi\frac tD\right).
\]
Also,
\[
-\alpha_-=t-s+\frac tD,
\qquad
\beta_-=t-s-1+\frac tD.
\]
Substituting these identities gives \eqref{eq:Uab}.

If \(t=D\), then the factor \(\sin^2(\pi t/D)\) vanishes, so
\(U_{s,D}=0\).  Suppose now that \(t<D\).  Then this sine factor is strictly
positive.  Set
\[
P_1=t-s+\frac tD,
\quad
Q_1=t-s-1+\frac tD,
\quad
P_2=s+t+\frac tD,
\quad
Q_2=2D-s-t+1-\frac tD.
\]
All four quantities are positive; in particular,
\(Q_1=t-s-1+t/D\ge t/D>0\).  Moreover,
\[
P_2-P_1=2s\ge0,
\qquad
Q_2-Q_1=2\left(D-t+1-\frac tD\right)>0,
\]
because \(t<D\).  Hence \(P_2Q_2>P_1Q_1\), and the bracketed expression in
\eqref{eq:Uab} is positive.  Thus \(U_{s,t}>0\).
\end{proof}

Similarly, define
\[
V_{s,t}=
\int_0^1\!\int_0^1 v(L_s(x,y))u(L_t(x,y))\,dy\,dx.
\]

\begin{lemma}\label{lem:Vab}
For every \(0\le s<t\le D\),
\begin{align}\label{eq:Vab}
V_{s,t}
&=
\frac{\sin^2(\pi s/D)}{2\pi^2}
\left[
\frac{1}{(t-s-s/D)(t-s+1-s/D)}
\right. \notag\\
&\hspace{3.6cm}\left.
-
\frac{1}{(s+t+s/D)(2D-s-t+1-s/D)}
\right].
\end{align}
In particular, \(V_{s,t}>0\) when \(s>0\), while \(V_{0,t}=0\).
\end{lemma}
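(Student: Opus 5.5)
The plan is to repeat the computation of Lemma~\ref{lem:Uab} with the roles of \(u\) and \(v\) interchanged: reduce \(V_{s,t}\) to products of the one-variable integrals \(J\), evaluate them with Lemma~\ref{lem:J-product}, and then compare denominators to get the sign. Set
\[
A=2\pi(D+1)L_s(x,y),\qquad B=2\pi D L_t(x,y).
\]
Writing both cosines in exponential form gives
\[
V_{s,t}=\tfrac12\Re\Bigl(\int_0^1\!\int_0^1 e^{\mathrm i(A+B)}\,dy\,dx+\int_0^1\!\int_0^1 e^{\mathrm i(A-B)}\,dy\,dx\Bigr).
\]
The next step is to read off the frequencies:
\[
A\pm B=2\pi(\beta_\pm x+\alpha_\pm y),
\]
where
\[
\alpha_+=s+t+\tfrac sD,\qquad \beta_+=2D+1-s-t-\tfrac sD,
\]
\[
\alpha_-=s-t+\tfrac sD,\qquad \beta_-=t-s+1-\tfrac sD.
\]
Hence the two double integrals factor as \(J(\alpha_\pm)J(\beta_\pm)\). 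The key observation is \(\alpha_++\beta_+=2D+1\in\Z\) and \(\alpha_-+\beta_-=1\in\Z\), so Lemma~\ref{lem:J-product} applies to both products.

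Since \(s,t\in\Z\), we have \(\sin^2(\pi\alpha_\pm)=\sin^2(\pi s/D)\). Lemma~\ref{lem:J-product} then gives
\[
J(\alpha_-)J(\beta_-)=\frac{\sin^2(\pi s/D)}{\pi^2(t-s-s/D)(t-s+1-s/D)},
\]
using \(\alpha_-=-(t-s-s/D)\). It also gives
\[
J(\alpha_+)J(\beta_+)=-\frac{\sin^2(\pi s/D)}{\pi^2(s+t+s/D)(2D-s-t+1-s/D)}.
\]
Both expressions are real, so taking \(\tfrac12\Re\) of their sum yields \eqref{eq:Vab}. The continuous-extension clause in Lemma~\ref{lem:J-product} is not needed here, because the four quantities below are nonzero.

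For the sign, set
\[
P_1=t-s-\tfrac sD,\qquad Q_1=t-s+1-\tfrac sD,\qquad P_2=s+t+\tfrac sD,\qquad Q_2=2D-s-t+1-\tfrac sD.
\]
The main point to check is that all four are positive.
\begin{itemize}
\item Since \(0\le s<t\le D\), we have \(s\le D-1\), so \(s/D<1\le t-s\). This gives \(P_1>0\), and hence \(Q_1>0\).
\item Clearly \(P_2>0\).
\item Using \(s\le D-1\) and \(t\le D\), we get \(Q_2\ge 2-s/D>0\).
\end{itemize}
Moreover, \(P_2-P_1=2s+2s/D\), which is strictly positive when \(s>0\), and \(Q_2-Q_1=2(D-t)\ge0\). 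Therefore \(P_2Q_2>P_1Q_1>0\) when \(s>0\), so the bracket in \eqref{eq:Vab} is positive. The prefactor \(\sin^2(\pi s/D)\) is also positive, because \(0<s<D\). This gives \(V_{s,t}>0\).

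If \(s=0\), the factor \(\sin^2(\pi s/D)\) vanishes, so \(V_{0,t}=0\). The only delicate step is this positivity bookkeeping, in particular confirming \(P_1>0\) via \(s/D<1\). Everything else is a direct transcription of the proof of Lemma~\ref{lem:Uab}.
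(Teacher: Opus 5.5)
Your proposal is correct and follows the paper's proof essentially line for line. It uses the same exponential expansion with $A=2\pi(D+1)L_s$ and $B=2\pi D L_t$, the same frequencies $\alpha_\pm,\beta_\pm$ with integer sums $2D+1$ and $1$, Lemma~\ref{lem:J-product}, and the same comparison $P_2-P_1=2s+2s/D$, $Q_2-Q_1=2(D-t)$; you also spell out two points the paper only asserts, namely that $Q_2\ge 2-s/D>0$ and that all of $\alpha_\pm,\beta_\pm$ are nonzero, so no continuous extension is needed.
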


\begin{proof}
The calculation is parallel to that in Lemma~\ref{lem:Uab}.  Set
\[
A=2\pi(D+1)L_s(x,y),
\qquad
B=2\pi D L_t(x,y).
\]
Then
\[
v(L_s)=\frac{e^{\mathrm iA}+e^{-\mathrm iA}}2,
\qquad
u(L_t)=\frac{e^{\mathrm iB}+e^{-\mathrm iB}}2.
\]
Hence
\[
V_{s,t}
=
\frac12\Re\left(
\int_0^1\!\int_0^1 e^{\mathrm i(A+B)}\,dy\,dx
+
\int_0^1\!\int_0^1 e^{\mathrm i(A-B)}\,dy\,dx
\right).
\]
Write
\[
A+B=2\pi(\beta_+x+\alpha_+y),
\qquad
A-B=2\pi(\beta_-x+\alpha_-y),
\]
where
\[
\alpha_+=s+t+\frac sD,
\qquad
\beta_+=2D-s-t+1-\frac sD,
\]
and
\[
\alpha_-=s-t+\frac sD,
\qquad
\beta_-=t-s+1-\frac sD.
\]
Then the two integrals are \(J(\alpha_+)J(\beta_+)\) and
\(J(\alpha_-)J(\beta_-)\), respectively.  Moreover,
\[
\alpha_++\beta_+=2D+1\in\Z,
\qquad
\alpha_-+\beta_-=1\in\Z.
\]
By Lemma~\ref{lem:J-product},
\[
J(\alpha_+)J(\beta_+)
=
-\frac{\sin^2(\pi\alpha_+)}{\pi^2\alpha_+\beta_+},
\qquad
J(\alpha_-)J(\beta_-)
=
-\frac{\sin^2(\pi\alpha_-)}{\pi^2\alpha_-\beta_-}.
\]
Since \(s\in\Z\),
\[
\sin^2(\pi\alpha_+)=\sin^2\left(\pi\frac sD\right),
\qquad
\sin^2(\pi\alpha_-)=\sin^2\left(\pi\frac sD\right).
\]
Also,
\[
-\alpha_-=t-s-\frac sD,
\qquad
\beta_-=t-s+1-\frac sD.
\]
Substituting these identities gives \eqref{eq:Vab}.

If \(s=0\), then the factor \(\sin^2(\pi s/D)\) vanishes, so
\(V_{0,t}=0\).  Suppose now that \(s>0\).  Then this sine factor is strictly
positive.  Set
\[
P_1=t-s-\frac sD,
\quad
Q_1=t-s+1-\frac sD,
\quad
P_2=s+t+\frac sD,
\quad
Q_2=2D-s-t+1-\frac sD.
\]
All four quantities are positive; in particular, \(P_1\ge 1-s/D>0\).  Moreover,
\[
P_2-P_1=2s+\frac{2s}{D}>0,
\qquad
Q_2-Q_1=2(D-t)\ge0.
\]
Thus \(P_2Q_2>P_1Q_1\), and the bracketed expression in \eqref{eq:Vab} is
positive.  Hence \(V_{s,t}>0\).
\end{proof}

Summing the mixed correlations gives the required positive mixed term.  Define
\begin{equation}\label{eq:Lambda}
\begin{aligned}
\Lambda_Q
=
\sum_{0\le i<j\le k}
&\Biggl(
\int_0^1\!\int_0^1 u(L_i(x,y))v(L_j(x,y))\,dy\,dx \\
&\qquad
+
\int_0^1\!\int_0^1 v(L_i(x,y))u(L_j(x,y))\,dy\,dx
\Biggr).
\end{aligned}
\end{equation}

\begin{corollary}\label{cor:Lambda-positive}
We have \(\Lambda_Q>0\).
\end{corollary}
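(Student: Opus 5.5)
The plan is to rewrite each summand of \(\Lambda_Q\) in terms of the quantities \(U_{s,t}\) and \(V_{s,t}\) from Lemmas~\ref{lem:Uab} and~\ref{lem:Vab}. Since \(L_i=L_{a_i}\) in the notation of the mixed-correlation subsection, and \(0\le a_i<a_j\le D\) for \(i<j\), we get
\[
\Lambda_Q=\sum_{0\le i<j\le k}\bigl(U_{a_i,a_j}+V_{a_i,a_j}\bigr).
\]
By Lemmas~\ref{lem:Uab} and~\ref{lem:Vab}, every term \(U_{a_i,a_j}\) and \(V_{a_i,a_j}\) is nonnegative. More precisely, \(U_{s,t}\ge0\) with equality exactly when \(t=D\), and \(V_{s,t}\ge0\) with equality exactly when \(s=0\). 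Hence \(\Lambda_Q\ge0\), and it suffices to exhibit a single pair \(i<j\) for which one of these two terms is strictly positive.

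For this, I use \(k\ge2\). Take \(i=0\) and \(j=1\). Then \(a_0=0<a_1<a_2\le D\), so \(a_1<D\), and Lemma~\ref{lem:Uab} gives \(U_{0,a_1}>0\). Alternatively, \(i=1\), \(j=k\) gives \(a_1>0\) and hence \(V_{a_1,D}>0\) by Lemma~\ref{lem:Vab}. Either choice works. Combined with the nonnegativity of all the other terms, this yields \(\Lambda_Q\ge U_{0,a_1}>0\).

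I do not expect any real obstacle. The only points needing care are:
\begin{itemize}
\item checking that the index convention matches, namely that the first factor \(u\) sits at the smaller index in \(U\) and the factor \(v\) sits at the smaller index in \(V\), exactly as in \eqref{eq:Lambda};
\item noting that the hypothesis \(k\ge2\) is what guarantees an intermediate point \(a_1\) strictly between \(0\) and \(D\).
\end{itemize}
Indeed, for \(k=1\) the only pair is \((0,D)\), where both terms vanish, consistent with the trivial two-point case.
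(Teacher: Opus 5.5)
Your proposal is correct and matches the paper's proof: you identify each summand of \(\Lambda_Q\) with \(U_{a_i,a_j}\) or \(V_{a_i,a_j}\), get nonnegativity from Lemmas~\ref{lem:Uab} and~\ref{lem:Vab}, and use \(k\ge2\) to obtain \(0<a_1<D\), hence \(U_{0,a_1}>0\). Your alternative witness \(V_{a_1,D}>0\) is also valid by Lemma~\ref{lem:Vab}.
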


\begin{proof}
For \(i<j\), the affine forms \(L_i\) and \(L_j\) are the forms \(L_{a_i}\)
and \(L_{a_j}\).  Hence Lemmas~\ref{lem:Uab} and \ref{lem:Vab} give
\[
\int_0^1\!\int_0^1 u(L_i(x,y))v(L_j(x,y))\,dy\,dx
=U_{a_i,a_j}\ge0
\]
and
\[
\int_0^1\!\int_0^1 v(L_i(x,y))u(L_j(x,y))\,dy\,dx
=V_{a_i,a_j}\ge0.
\]
Since \(k\ge2\), there is an interior point, and hence \(0<a_1<D\).  For the
pair \((a_0,a_1)=(0,a_1)\), Lemma~\ref{lem:Uab} gives \(U_{0,a_1}>0\), while
Lemma~\ref{lem:Vab} gives \(V_{0,a_1}=0\).  Thus at least one term in
\eqref{eq:Lambda} is strictly positive, and all terms are nonnegative.
Therefore \(\Lambda_Q>0\).
\end{proof}

Finally, we construct the function $g$ satisfying $T_2(g) < 0$.
We write \(C^\infty([0,1])\) for the restrictions to \([0,1]\) of smooth
functions defined on an open neighborhood of \([0,1]\).

\begin{proposition}\label{prop:negative-direction}
There exists a function \(g\in C^\infty([0,1])\) with
\(g:[0,1]\to[-1,1]\) such that \(T_2(g)<0\).
\end{proposition}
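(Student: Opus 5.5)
We take \(g=c\,(u-\varepsilon v)\) with \(\varepsilon>0\) small and a normalizing constant \(c>0\) chosen so that \(|g|\le1\); for instance \(c=1/2\) works whenever \(0<\varepsilon\le1\), since \(|u|,|v|\le1\). Both \(u\) and \(v\) are restrictions of entire functions, so \(g\in C^\infty([0,1])\) and \(g:[0,1]\to[-1,1]\).

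The first step is to expand the quadratic form. We write the symmetric bilinear form associated with \(T_2\) as
\[
B(f,h)=\sum_{0\le i<j\le k}\int_0^1\!\int_0^1 f(L_i(x,y))\,h(L_j(x,y))\,dy\,dx .
\]
Then \(T_2(f)=B(f,f)\). Bilinearity gives
\[
T_2(u-\varepsilon v)=T_2(u)-\varepsilon\bigl(B(u,v)+B(v,u)\bigr)+\varepsilon^2T_2(v).
\]
By definition \eqref{eq:Lambda}, \(B(u,v)+B(v,u)=\Lambda_Q\). Lemma~\ref{lem:T2u-zero} gives \(T_2(u)=0\). Hence
\[
T_2(u-\varepsilon v)=-\varepsilon\Lambda_Q+\varepsilon^2T_2(v).
\]

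The second step is to bound the last term. Since \(|v|\le1\), we have the crude bound \(|T_2(v)|\le\binom{k+1}{2}\). Corollary~\ref{cor:Lambda-positive} gives \(\Lambda_Q>0\). Choose
\[
\varepsilon=\min\Bigl\{1,\ \Lambda_Q\Big/\bigl(2\tbinom{k+1}{2}\bigr)\Bigr\}.
\]
With this choice \(T_2(u-\varepsilon v)\le-\varepsilon\Lambda_Q/2<0\).

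The third step is the normalization. Because \(T_2\) is homogeneous of degree \(2\), we get \(T_2(g)=c^2\,T_2(u-\varepsilon v)<0\).

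None of these steps is a real obstacle: the substantive work is already contained in Lemma~\ref{lem:T2u-zero} and Corollary~\ref{cor:Lambda-positive}. The only point needing care is that the cross term in the expansion is exactly \(\Lambda_Q\). The sum in \(T_2\) runs over ordered pairs \(i<j\), so one must pair \(u\) at \(L_i\) with \(v\) at \(L_j\) and also \(v\) at \(L_i\) with \(u\) at \(L_j\). These are precisely the two integrals appearing in \eqref{eq:Lambda}.
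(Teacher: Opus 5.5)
Your proof is correct and follows the paper's argument essentially verbatim: perturb the null direction $u$ by $-\varepsilon v$, expand using $T_2(u)=0$ and the cross term $\Lambda_Q>0$, absorb $\varepsilon^2T_2(v)$ with the crude bound $\binom{k+1}{2}$, and rescale (the paper divides by $1+\tau$ where you multiply by $1/2$). One cosmetic point: the form $B(f,h)$ you wrote is not actually symmetric (the paper symmetrizes it), but your expansion uses only bilinearity, so the argument is unaffected.
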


\begin{proof}
Define the symmetric bilinear form
\[
B_2(f,h)
=
\frac12
\sum_{0\le i<j\le k}
\int_0^1\!\int_0^1
\bigl(
f(L_i(x,y))h(L_j(x,y))
+
h(L_i(x,y))f(L_j(x,y))
\bigr)
\,dy\,dx.
\]
Then \(T_2(f)=B_2(f,f)\), and by \eqref{eq:Lambda},
\(2B_2(u,v)=\Lambda_Q>0\).  By Lemma~\ref{lem:T2u-zero}, \(T_2(u)=0\).  Hence,
for every \(\tau>0\),
\[
T_2(u-\tau v)
=
T_2(u)-2\tau B_2(u,v)+\tau^2T_2(v)
=
-\tau\Lambda_Q+\tau^2T_2(v).
\]
Since \(|v|\le1\), we have
\[
|T_2(v)|
\le
\sum_{0\le i<j\le k}
\int_0^1\!\int_0^1
|v(L_i(x,y))v(L_j(x,y))|\,dy\,dx
\le
\binom{k+1}{2}.
\]
Choose
\[
0<\tau<\frac{\Lambda_Q}{2\binom{k+1}{2}}.
\]
Then
\[
T_2(u-\tau v)<-\frac{\tau\Lambda_Q}{2}<0.
\]
Set
\[
g(x)=\frac{u(x)-\tau v(x)}{1+\tau}.
\]
Since \(|u|,|v|\le1\), we have \(|g|\le1\).  Also
\(g\in C^\infty([0,1])\).  Since \(T_2\) is homogeneous of degree two,
\[
T_2(g)=\frac{1}{(1+\tau)^2}T_2(u-\tau v)<0.
\]
\end{proof}

\section{From a negative direction to deterministic colorings}
\label{sec:deterministic-colorings}

We now use the negative direction constructed in Proposition~\ref{prop:negative-direction}
to obtain a biased product coloring whose expected monochromatic count has
leading coefficient below the random-coloring coefficient.  A standard
conditional-expectation argument then gives a deterministic coloring with the
same asymptotic improvement.

Fix a function \(g\) as in Proposition~\ref{prop:negative-direction}, and set $\eta_Q=-T_2(g)>0$.
By Lemma~\ref{lem:even-expansion}, there exists a constant \(C_k>0\),
depending only on \(k\), such that, for all \(|\lambda|\le1\),
\begin{equation}\label{eq:Phi-lambda-g}
\Phi_Q(\lambda g)
=
2^{-k}
-
2^{-k}\eta_Q\lambda^2
+
R(\lambda),
\qquad
|R(\lambda)|\le C_k\lambda^4.
\end{equation}
Choose \(\lambda>0\) so small that
\begin{equation}\label{eq:lambda-choice}
C_k\lambda^2\le 2^{-k-1}\eta_Q.
\end{equation}
For instance, one may take
\[
0<\lambda\le
\min\left\{1,\sqrt{\frac{\eta_Q}{2^{k+1}C_k}}\right\}.
\]
Then \eqref{eq:Phi-lambda-g} and \eqref{eq:lambda-choice} give
\[
\Phi_Q(\lambda g)
\le
2^{-k}-2^{-k}\eta_Q\lambda^2+C_k\lambda^4
\le
2^{-k}-2^{-k-1}\eta_Q\lambda^2.
\]
Set $c_Q=2^{-k-1}\eta_Q\lambda^2>0$.
Thus
\begin{equation}\label{eq:Phi-improvement}
\Phi_Q(\lambda g)\le 2^{-k}-c_Q.
\end{equation}

Take \(b=\lambda g\) in the product coloring \(\chi_{n,b}\).  Since
\(g\in C^\infty([0,1])\), the function \(b\) is Lipschitz; since \(|g|\le1\)
and \(|\lambda|\le1\), it is an admissible bias function.  By
Proposition~\ref{prop:transfer} and \eqref{eq:Phi-improvement},
\begin{equation}\label{eq:expected-ordered}
\Mord_Q(n;\lambda g)
=
\frac{n^2}{D}\Phi_Q(\lambda g)+O_Q(n)
\le
\left(2^{-k}-c_Q\right)\frac{n^2}{D}+O_Q(n).
\end{equation}
Here the error term is \(O_Q(n)\), since \(g\) and \(\lambda\) are fixed once
\(Q\) is fixed.

It remains to pass from the biased product coloring to a deterministic one. We
use the following standard form of the method of conditional expectations.

\begin{lemma}\label{lem:cond-exp}
Let \(X_1,\dots,X_n\) be independent random variables, each taking finitely
many values, and let \(F=F(X_1,\dots,X_n)\) be an integrable real-valued random
variable.  Then there exists a realization \((x_1,\dots,x_n)\) such that
\[
F(x_1,\dots,x_n)\le \E[F].
\]
Moreover, one may choose such a realization by fixing the variables one at a
time so that the conditional expectation never increases.
\end{lemma}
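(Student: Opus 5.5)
The plan is to prove the lemma by induction on the number of fixed variables, using the tower property of conditional expectation. Since each \(X_j\) takes finitely many values, we may discard values of probability zero. Then every remaining value \(x\) has \(\Prob(X_j=x)>0\), so all conditional expectations below are elementary weighted averages, and the independence of the \(X_j\) lets us fix them sequentially.

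For \(0\le m\le n\) and values \(x_1,\dots,x_m\) with positive probability, define
\[
E_m(x_1,\dots,x_m)=\E\bigl[F\mid X_1=x_1,\dots,X_m=x_m\bigr].
\]
By independence, this equals \(\E[F(x_1,\dots,x_m,X_{m+1},\dots,X_n)]\), which is finite since \(F\) is integrable and the conditioning events have positive probability; in our setting \(F\) takes finitely many values anyway. Note that \(E_0=\E[F]\) and \(E_n(x_1,\dots,x_n)=F(x_1,\dots,x_n)\).

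The key step is the averaging identity
\[
E_m(x_1,\dots,x_m)=\sum_{x}\Prob(X_{m+1}=x)\,E_{m+1}(x_1,\dots,x_m,x),
\]
where the sum runs over the finitely many values of \(X_{m+1}\) with positive probability. This follows from the law of total expectation together with the independence of \(X_{m+1}\) from \(X_1,\dots,X_m\). A weighted average with nonnegative weights summing to \(1\) is at least the minimum of its terms. Hence there is some value \(x_{m+1}\) with
\[
E_{m+1}(x_1,\dots,x_{m+1})\le E_m(x_1,\dots,x_m).
\]

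Choosing \(x_1,x_2,\dots,x_n\) greedily in this way produces the chain \(F(x_1,\dots,x_n)=E_n\le E_{n-1}\le\cdots\le E_0=\E[F]\). This proves both the existence claim and the "one variable at a time, conditional expectation never increases" description. There is no real obstacle. The only point needing care is restricting to values of positive probability, so that the conditional expectations are well defined and the averaging identity holds.

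In the application, \(F=M_Q(\chi)\) is a finite sum over constellations of products of indicator-type terms. The conditional expectations are therefore explicitly computable, which makes the derandomization effective, though that is not needed for the lemma itself.
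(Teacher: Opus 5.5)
Your proposal is correct and follows the paper's argument. The paper also fixes the variables one at a time, noting that the current conditional expectation is a convex combination of the conditional expectations over the values of the next variable, so some value does not increase it. Your restriction to values of positive probability, and your use of independence to identify the weights as \(\Prob(X_{m+1}=x)\), add a bit of rigor that the paper leaves implicit.
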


\begin{proof}
Suppose that \(x_1,\dots,x_{m-1}\) have already been chosen.  The conditional
expectation
\[
\E\!\left[
F\mid X_1=x_1,\dots,X_{m-1}=x_{m-1}
\right]
\]
is a convex combination of the conditional expectations obtained by specifying
the value of \(X_m\).  Hence at least one value \(x_m\) satisfies
\[
\E\!\left[
F\mid X_1=x_1,\dots,X_m=x_m
\right]
\le
\E\!\left[
F\mid X_1=x_1,\dots,X_{m-1}=x_{m-1}
\right].
\]
Iterating this choice for \(m=1,\dots,n\), the final conditional expectation
is \(F(x_1,\dots,x_n)\), and it is at most the original expectation \(\E[F]\).
\end{proof}

\begin{proof}[Proof of Theorem~\ref{thm:main}]
Let \(X_m\in\{\pm1\}\) be the color of \(m\) in the biased product coloring
with bias \(b=\lambda g\), and let \(\chi_X(m)=X_m\).  Apply
Lemma~\ref{lem:cond-exp} to
$F=\Mord_Q(\chi_X)$.
Since \(\E[F]=\Mord_Q(n;\lambda g)\), \eqref{eq:expected-ordered} gives a
deterministic coloring \(\chi_n:[n]\to\{\pm1\}\) such that
\begin{equation}\label{eq:deterministic-ordered}
\Mord_Q(\chi_n)
\le
\left(2^{-k}-c_Q\right)\frac{n^2}{D}+O_Q(n).
\end{equation}

If \(Q\) is nonsymmetric, then ordered \(Q\)-constellations are in bijection
with underlying \(Q\)-constellations by Lemma~\ref{lem:actual-ordered}.
Therefore
\[
M_Q(\chi_n)
\le
\left(\frac{1}{2^kD}-\frac{c_Q}{D}\right)n^2+O_Q(n).
\]
If \(Q\) is symmetric, then the map from ordered constellations to underlying
constellations is two-to-one.  Hence
\[
M_Q(\chi_n)
\le
\left(\frac{1}{2^{k+1}D}-\frac{c_Q}{2D}\right)n^2+O_Q(n).
\]
Taking
\[
\delta_Q=\frac{c_Q}{2D}>0
\]
in both cases, and recalling the definition of \(\gamma_{\rand}(Q)\), we obtain
\[
M_Q(\chi_n)
\le
\bigl(\gamma_{\rand}(Q)-\delta_Q\bigr)n^2+O_Q(n).
\]
This proves Theorem~\ref{thm:main}.
\end{proof}

\section{Applications}
\label{sec:applications}

We record two consequences of Theorem~\ref{thm:main}.  First, it gives a
uniform family of uncommon translation-invariant linear systems in the integer
interval model.  Second, it yields a ground-state bound for a deterministic
antiferromagnetic hypergraph spin system whose hyperedges are rational
constellations.

\subsection{A linear-system consequence}
\label{subsec:linear-system}

In graph Ramsey multiplicity theory, a graph is called common if the uniformly
random edge-coloring asymptotically minimizes the number of monochromatic
copies.  The Burr--Rosta conjecture asserted that all graphs are common, but
this was disproved by Thomason for \(K_4\), with further developments by
Jagger, \v{S}\v{t}ov\'\i\v{c}ek, and Thomason \cite{BR80,Th89,JST96}.
Arithmetic analogues ask the same question for linear configurations under
colorings of groups or intervals.  In finite-field and finite-group settings,
commonness phenomena can behave differently; for example, common and Sidorenko
single equations have been studied and classified in several settings
\cite{SW17,FPZ21,Ver23}.  Theorem~\ref{thm:main} concerns the integer interval
model and yields a uniform family of interval-uncommon translation-invariant
systems.

Let
\[
Q=[0=q_0<q_1<\cdots<q_k=1]
\]
be a rational constellation pattern with \(k\ge2\), and let \(D\) be the least
common denominator of \(q_0,\dots,q_k\).  Write \(a_i=Dq_i\).  Associated to
\(Q\) is the translation-invariant linear system
\[
\mathcal L_Q:\qquad
D x_i=(D-a_i)x_0+a_i x_k,
\qquad 1\le i\le k-1.
\]
Thus \(\mathcal L_Q\) is a system of \(k-1\) equations in the variables
\(x_0,\dots,x_k\), and each equation has coefficient sum zero.

For a coloring \(\chi:[n]\to\{\pm1\}\), let
\(M_{\mathcal L_Q}^{\mathrm{ind}}(\chi)\) denote the number of monochromatic
nonconstant indexed solutions \((x_0,\dots,x_k)\in[n]^{k+1}\) of
\(\mathcal L_Q\), where nonconstant means \(x_0\ne x_k\).  Such a solution is
precisely an endpoint representation of a \(Q\)-constellation: setting
\(s=x_0\) and \(d=x_k-x_0\ne0\), the equations give
\[
x_i=s+q_i d
\qquad (0\le i\le k).
\]

By Lemma~\ref{lem:actual-ordered}, the indexed solution count and the
constellation count differ only by the reversal factor
\[
M_{\mathcal L_Q}^{\mathrm{ind}}(\chi)
=
\nu_Q M_Q(\chi),
\qquad
\nu_Q=
\begin{cases}
1, & Q\text{ is nonsymmetric},\\
2, & Q\text{ is symmetric}.
\end{cases}
\]
Accordingly, the random-coloring coefficient for indexed nonconstant solutions
is
\[
\Gamma_{\rand}(\mathcal L_Q)
=
\nu_Q\gamma_{\rand}(Q)
=
\frac{1}{2^kD}.
\]

\begin{corollary}\label{cor:interval-system-uncommon}
For every rational constellation pattern \(Q\) with \(k\ge2\), the associated
system \(\mathcal L_Q\) is uncommon in the integer interval model.  More
precisely, there exist \(\delta_Q'>0\) and colorings
\(\chi_n:[n]\to\{\pm1\}\) such that
\[
M_{\mathcal L_Q}^{\mathrm{ind}}(\chi_n)
\le
\bigl(\Gamma_{\rand}(\mathcal L_Q)-\delta_Q'\bigr)n^2+O_Q(n).
\]
\end{corollary}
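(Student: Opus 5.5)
This corollary follows from Theorem~\ref{thm:main} by rescaling with the reversal factor \(\nu_Q\). We take the colorings \(\chi_n\) produced by Theorem~\ref{thm:main}. They satisfy \(M_Q(\chi_n)\le(\gamma_{\rand}(Q)-\delta_Q)n^2+O_Q(n)\).

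The identity we need is \(M^{\mathrm{ind}}_{\mathcal L_Q}(\chi)=\nu_Q M_Q(\chi)\) for every coloring \(\chi\). To justify it, we check that nonconstant indexed solutions of \(\mathcal L_Q\) in \([n]^{k+1}\) are exactly the ordered \(Q\)-constellations in \([n]\).

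\begin{itemize}
\item Let \((x_0,\dots,x_k)\) be such a solution with \(x_0\ne x_k\). Put \(p=x_0\) and \(q=x_k\). Each equation gives \(x_i=x_i(p,q)\) in the notation of \eqref{eq-xpq}. Since every \(x_i\) is an integer, \(D\mid a_i(q-p)\) for all \(i\). The B\'ezout argument of Proposition~\ref{pro-1}, using \(\gcd(a_1,\dots,a_{k-1},D)=1\), then forces \(p\equiv q\pmod D\).
\item Conversely, every ordered \(Q\)-constellation is a nonconstant indexed solution.
\end{itemize}

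Monochromaticity depends only on the underlying set, so \(M^{\mathrm{ind}}_{\mathcal L_Q}(\chi)=\Mord_Q(\chi)\). By Lemma~\ref{lem:actual-ordered}, \(\Mord_Q(\chi)=\nu_Q M_Q(\chi)\).

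Multiplying the bound of Theorem~\ref{thm:main} by \(\nu_Q\) gives
\[
M^{\mathrm{ind}}_{\mathcal L_Q}(\chi_n)\le\bigl(\nu_Q\gamma_{\rand}(Q)-\nu_Q\delta_Q\bigr)n^2+O_Q(n).
\]
We then take \(\delta_Q'=\nu_Q\delta_Q>0\). The identity \(\nu_Q\gamma_{\rand}(Q)=1/(2^kD)=\Gamma_{\rand}(\mathcal L_Q)\) is immediate from \eqref{eq:Nact2}.

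Alternatively, one could use the bound \eqref{eq:deterministic-ordered} from the proof of Theorem~\ref{thm:main} directly, since \(M^{\mathrm{ind}}_{\mathcal L_Q}=\Mord_Q\). This gives \(\delta_Q'=c_Q/D\) without any case split.

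The only step needing care is the integrality/congruence check in the identification of solutions with ordered constellations. It is handled exactly as in Proposition~\ref{pro-1}, so I expect no real obstacle.
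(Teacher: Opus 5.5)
Your proposal is correct and follows the paper's proof. You take the colorings from Theorem~\ref{thm:main}, use $M^{\mathrm{ind}}_{\mathcal L_Q}=\nu_Q M_Q$, and set $\delta_Q'=\nu_Q\delta_Q$; you justify that identity more explicitly than the paper's text does, via the B\'ezout congruence check and Lemma~\ref{lem:actual-ordered}, and your alternative via \eqref{eq:deterministic-ordered}, giving $\delta_Q'=c_Q/D$ without a case split, is also valid.
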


\begin{proof}
By Theorem~\ref{thm:main}, there are colorings \(\chi_n\) and a constant
\(\delta_Q>0\) such that
\[
M_Q(\chi_n)
\le
\bigl(\gamma_{\rand}(Q)-\delta_Q\bigr)n^2+O_Q(n).
\]
Multiplying by \(\nu_Q\), we obtain
\[
M_{\mathcal L_Q}^{\mathrm{ind}}(\chi_n)
\le
\bigl(\nu_Q\gamma_{\rand}(Q)-\nu_Q\delta_Q\bigr)n^2+O_Q(n).
\]
Since \(\Gamma_{\rand}(\mathcal L_Q)=\nu_Q\gamma_{\rand}(Q)\), the claim
follows with \(\delta_Q'=\nu_Q\delta_Q\).
\end{proof}

Thus every rational one-dimensional affine pattern with at least three points
gives an interval-uncommon translation-invariant linear system.  This includes
systems encoding longer arithmetic progressions and higher-codimension
one-dimensional rational configurations.

\subsection{A spin-system consequence}
\label{subsec:spin-system}

Spin systems and spin glasses study energy minimization problems on the
hypercube \(\{-1,+1\}^n\); see, for example, Panchenko's monograph on the
Sherrington--Kirkpatrick model \cite{Panchenko13Book}.  At zero temperature,
one is interested in ground-state energy, as in the mixed \(p\)-spin models
studied by Auffinger and Chen \cite{AuffingerChen17}.  Hypergraph coloring
also has a natural spin-system formulation, where the energy penalizes
monochromatic hyperedges; see, for instance, the work of Bapst, Coja-Oghlan,
and Ra{\ss}mann on random hypergraph \(2\)-coloring
\cite{BapstCojaOghlanRassmann16}.  In contrast with these random models, the
hypergraph below is deterministic and arithmetic: its edges are the
\(Q\)-constellations in \([n]\).

Let \(\mathcal C_Q(n)\) be the family of \(Q\)-constellations in \([n]\), and
view $\mathcal H_Q(n)=([n],\mathcal C_Q(n))$
as a \((k+1)\)-uniform hypergraph.  A spin configuration is a vector
\(\sigma\in\{-1,+1\}^{[n]}\).  For \(e\in\mathcal C_Q(n)\), set
$\delta_e(\sigma)=\mathbf 1\{\sigma \text{ is constant on } e\}$,
and define the energy
$E_{Q,n}(\sigma)=\sum_{e\in\mathcal C_Q(n)}\delta_e(\sigma)$.
Identifying \(\sigma\) with the corresponding two-coloring, we have
\(E_{Q,n}(\sigma)=M_Q(\sigma)\).  Hence minimizing monochromatic
\(Q\)-constellations is exactly the ground-state problem for the Hamiltonian
\(E_{Q,n}\).

Under the uniformly random coloring, each hyperedge is monochromatic with
probability \(2^{-k}\).  Hence the random-coloring main term for the energy is
$2^{-k}|\mathcal C_Q(n)|
=
\gamma_{\rand}(Q)n^2+O_Q(n)$.
Define the centered energy
$E_{Q,n}^{\mathrm{cent}}(\sigma)
=
E_{Q,n}(\sigma)-2^{-k}|\mathcal C_Q(n)|$.

Theorem~\ref{thm:main} gives the following ground-state bound.

\begin{proposition}\label{prop:spin-ground-state}
There exists a constant \(\delta_Q>0\) such that
\[
\min_{\sigma\in\{-1,+1\}^{[n]}}
E_{Q,n}^{\mathrm{cent}}(\sigma)
\le
-\delta_Q n^2+O_Q(n).
\]
Equivalently,
\[
\min_{\sigma\in\{-1,+1\}^{[n]}}E_{Q,n}(\sigma)
\le
\bigl(\gamma_{\rand}(Q)-\delta_Q\bigr)n^2+O_Q(n).
\]
\end{proposition}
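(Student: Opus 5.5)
This is a direct translation of Theorem~\ref{thm:main} into the spin-system language, so the plan is to identify the energy with the monochromatic count and then invoke the theorem.

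First, for every $\sigma\in\{-1,+1\}^{[n]}$, I regard $\sigma$ as the coloring $\chi=\sigma$. By definition $\delta_e(\sigma)=1$ exactly when $e$ is monochromatic under $\chi$. Hence $E_{Q,n}(\sigma)=M_Q(\chi)$, with each $Q$-constellation counted once as an element of $\mathcal C_Q(n)$. Next I take the colorings $\chi_n$ and the constant $\delta_Q>0$ supplied by Theorem~\ref{thm:main}, and set $\sigma_n=\chi_n$. Since the minimum is at most the value at any particular configuration, this gives
\[
\min_{\sigma}E_{Q,n}(\sigma)\le M_Q(\chi_n)\le\bigl(\gamma_{\rand}(Q)-\delta_Q\bigr)n^2+O_Q(n),
\]
which is the second displayed form of the proposition.

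For the centered form, I use $|\mathcal C_Q(n)|=N_Q(n)$ together with \eqref{eq:Nact} and \eqref{eq:Nact2}. These give $2^{-k}|\mathcal C_Q(n)|=\gamma_{\rand}(Q)n^2+O_Q(n)$ in both the symmetric and nonsymmetric cases. Subtracting this from the previous bound yields
\[
\min_{\sigma}E^{\mathrm{cent}}_{Q,n}(\sigma)\le-\delta_Q n^2+O_Q(n).
\]
The subtracted term is the same for every $\sigma$, so the minimum commutes with the subtraction. Conversely, adding $2^{-k}|\mathcal C_Q(n)|$ back to the centered bound recovers the uncentered one, which establishes the claimed equivalence up to $O_Q(n)$.

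There is no real obstacle here. The only point needing care is bookkeeping: $\mathcal C_Q(n)$ must consist of unordered constellations, i.e. sets, so that $E_{Q,n}$ equals $M_Q$ rather than $\Mord_Q$. This is consistent with how the random main term is normalized through $\gamma_{\rand}(Q)$ in the symmetric case.
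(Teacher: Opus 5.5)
Your proposal is correct and matches the paper's proof: you identify $E_{Q,n}(\sigma)$ with $M_Q(\sigma)$, apply Theorem~\ref{thm:main} for the uncentered bound, and subtract the $\sigma$-independent quantity $2^{-k}|\mathcal C_Q(n)|=\gamma_{\rand}(Q)n^2+O_Q(n)$ to get the centered form. Your extra check, that $\mathcal C_Q(n)$ consists of unordered constellations so that $|\mathcal C_Q(n)|=N_Q(n)$ and the symmetric-case normalization agrees, correctly spells out what the paper leaves implicit.
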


\begin{proof}
The second inequality is exactly Theorem~\ref{thm:main}, written in the spin
notation.  Subtracting
\[
2^{-k}|\mathcal C_Q(n)|=\gamma_{\rand}(Q)n^2+O_Q(n)
\]
gives the centered form.
\end{proof}

The centered Hamiltonian has an even-spin expansion.  For a fixed hyperedge
\(e\),
\[
\delta_e(\sigma)
=
\prod_{i\in e}\frac{1+\sigma_i}{2}
+
\prod_{i\in e}\frac{1-\sigma_i}{2}
=
\frac{1}{2^k}
\sum_{\substack{S\subseteq e\\ |S|\ \mathrm{even}}}
\prod_{i\in S}\sigma_i.
\]
Therefore
\[
E_{Q,n}^{\mathrm{cent}}(\sigma)
=
\frac{1}{2^k}
\sum_{e\in\mathcal C_Q(n)}
\sum_{\substack{S\subseteq e\\ |S|\ \mathrm{even}\\ |S|\ge2}}
\prod_{i\in S}\sigma_i.
\]
Thus the centered energy is an even multi-spin Hamiltonian with deterministic
arithmetic couplings.  Proposition~\ref{prop:spin-ground-state} says that its
ground-state energy lies below the random-coloring main term by order \(n^2\).
This spin-system consequence is deterministic: the couplings are induced by the
arithmetic incidence structure of \(Q\)-constellations, rather than by a random
graph or random hypergraph.

\section*{Acknowledgements}
This work was supported by the National Natural Science Foundation of China
(Nos. 12471329 and 12061059).

\end{document}